\renewcommand{\dim}{\mbox{dim}\,}
\renewcommand{\dim}{\mbox{dim}\,}
\newcommand{\set}{\mbox{set}\,}
\newcommand{\cochord}{\mbox{cochord}\,}
\newcommand{\T}{\mathrm}
\newcommand{\N}{\mathbb{N}}
\newtheorem{thm}{Theorem}[section]
\newtheorem{cor}[thm]{Corollary}
\newtheorem{lem}[thm]{Lemma}
\newtheorem{defn}[thm]{Definition}
\newtheorem{exam}[thm]{Example}
\numberwithin{equation}{section}
\tikzstyle{Cgray}=[draw=black, scale = .4,circle, fill = white, minimum size=7mm]
\tikzstyle{Cwhite}=[scale = .8,circle, fill = white, minimum size=8mm]
\tikzstyle{Cblack}=[scale = .3,circle, fill = black, minimum size=3mm]
\tikzstyle{C0}=[scale = .9,circle, fill = black!0, inner sep = 0pt, minimum size=3mm]
\tikzstyle{C1}=[scale = .7,circle, fill = black!0, inner sep = 0pt, minimum size=3mm]
\tikzstyle{CW}=[scale = .7,circle, fill = white!0, inner sep = 0pt, minimum size=7mm]
\tikzstyle{Cg}=[scale = .8,circle, fill = gray, minimum size=8mm]
\begin{document}
\bibliographystyle{amsplain}

\title[$t$-clique ideal and $t$-independence ideal of a graph]{$t$-clique ideal and $t$-independence ideal of a graph}
\author[S. Moradi]{Somayeh Moradi}
\address{Somayeh Moradi, Department of Mathematics, School of Science, Ilam University, P.O.Box 69315-516, Ilam, Iran.} \email{somayeh.moradi1@gmail.com}

\keywords{Cohen-Macaulay, t-clique ideal, linear resolution, shellable.\\
Email: somayeh.moradi1@gmail.com}
\subjclass[2010]{Primary 13D02, 13F55;    Secondary 16E05}

\begin{abstract}
\noindent
In this paper, we introduce and study families of squarefree monomial ideals called clique ideals and independence ideals that can be associated to a finite graph. A family of clique ideals with linear resolutions has been characterized. Moreover some families of graphs for which the quotient ring of their clique ideal is Cohen-Macaulay are introduced and some homological invariants of the clique ideal of a graph $G$ which is the complement of a path graph or a cycle graph, are obtained. Also some algebraic properties of the independence ideal of path graphs, cycle graphs and chordal graphs are studied.
\end{abstract}

\maketitle

\section*{Introduction}

Classifying all monomial ideals with some algebraic properties like having a linear resolution or being Cohen-Macaulay in general is not easy to deal with.
In this regard finding classes of monomial ideals with some special algebraic properties is important. In particular, finding a correspondence between some families of squarefree monomial ideals and some combinatorial objects such as graphs and simplicial complexes and characterizing the
algebraic invariants of the ideal in terms of the construction of the combinatorial object associated to it, has been studied extensively in the last few years. As the first sample of these ideals, squarefree monomial ideals of degree two had been considered as the edge ideals of simple graphs, which was first defined in \cite{V}. Later, some other squarefree monomial ideals attached to graphs like path ideals, generalized cover ideals, et cetera, have been studied and some new families of ideals with special algebraic properties had been characterized.

In this paper, we introduce and study the $t$-clique ideal and the $t$-independence ideal of a graph. The $t$-clique ideal is a natural generalization of the concept of the edge ideal of a graph. For a graph $G$, a complete subgraph of $G$ with $t$ vertices is called a $t$-clique of $G$. The ideal $K_t(G)$ generated by the monomials $x_{i_1}\cdots x_{i_t}$ of degree $t$ such that the induced subgraph of $G$ on the set $\{x_{i_1},\ldots, x_{i_t}\}$ is a complete graph, is called the $t$-clique ideal of $G$. Note that $K_2(G)=I(G)$.

The edge ideals of graphs with a linear resolution have been characterized in \cite{Fro} as follows.

\begin{thm}(\cite[Theorem 1]{Fro})\label{Fro1}
The graph $G$ is chordal if and only if $I(G^c)$ has a linear resolution.
\end{thm}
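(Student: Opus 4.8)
The plan is to reduce the statement, via Hochster's formula, to a vanishing condition on the reduced simplicial homology of the induced subcomplexes of the flag complex of $G$, and then to verify directly that this condition is equivalent to chordality. Let $\Delta(H)$ be the clique (flag) complex of a graph $H$, whose faces are the cliques of $H$. The minimal nonfaces of $\Delta(G)$ are precisely the nonedges of $G$ (any nonface of cardinality at least $3$ already contains a nonedge), so the Stanley--Reisner ideal of $\Delta(G)$ is exactly $I(G^c)$. Since $I(G^c)$ is generated in degree $2$, minimality of its free resolution gives $\beta_{i,j}(I(G^c))=0$ for $j<i+2$, so $I(G^c)$ has a linear resolution if and only if $\beta_{i,j}(I(G^c))=0$ for all $j\ge i+3$. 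Combining Hochster's formula
\[
\beta_{i,j}\big(I(G^c)\big)=\sum_{\substack{W\su V(G)\\ |W|=j}}\mathrm{dim}_{k}\,\widetilde{H}_{j-i-2}\big(\Delta(G[W]);k\big)
\]
with the elementary fact that a simplicial complex with at most $\ell+1$ vertices has vanishing reduced homology in degree $\ell$ for every $\ell\ge 1$, this criterion becomes: $I(G^c)$ has a linear resolution if and only if $\widetilde{H}_\ell\big(\Delta(G[W]);k\big)=0$ for every vertex set $W\su V(G)$ and every $\ell\ge 1$. It therefore suffices to prove that this homological vanishing holds exactly when $G$ is chordal.

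For the implication that a linear resolution forces $G$ to be chordal I argue by contraposition. If $G$ is not chordal, it contains an induced cycle $C$ of some length $m\ge 4$. Taking $W=V(C)$ and using that $C$ has no triangles, $\Delta(G[W])=\Delta(C)=C$ as a one-dimensional complex, a triangulated circle, so $\widetilde{H}_1\big(\Delta(G[W]);k\big)\cong k\ne 0$; by the criterion above $I(G^c)$ has no linear resolution.

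For the converse, assume $G$ is chordal. Since the class of chordal graphs is closed under taking induced subgraphs, it is enough to show $\widetilde{H}_\ell(\Delta(G);k)=0$ for all $\ell\ge 1$, which I prove by induction on $|V(G)|$, the case $|V(G)|\le 1$ being trivial. By Dirac's theorem $G$ has a simplicial vertex $v$, i.e.\ one whose neighbourhood $N(v)$ is a clique, and $G-v$ is again chordal. Write $\Delta(G)=S\cup D$, where $D=\del(v)=\Delta(G-v)$ and $S$ is the closed star of $v$, both formed in $\Delta(G)$, so that $S\cap D=\lk(v)$. Then $S=\{v\}\ast\lk(v)$ is a nonempty cone, hence acyclic, and $\lk(v)$ is the clique complex of $G[N(v)]$, which (because $N(v)$ is a clique) is a simplex, or the complex $\{\emptyset\}$ when $v$ is isolated; in either case $\widetilde{H}_j(\lk(v);k)=0$ for all $j\ge 0$. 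The reduced Mayer--Vietoris sequence of the decomposition $\Delta(G)=S\cup D$ then gives isomorphisms $\widetilde{H}_\ell(\Delta(G);k)\cong\widetilde{H}_\ell(\Delta(G-v);k)$ for all $\ell\ge 1$, which vanish by the inductive hypothesis.

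The main obstacle is less one of depth than of bookkeeping in the first step: one must correctly account for the degree shift in Hochster's formula and the low-dimensional vanishing of homology to land on the exact criterion "$\widetilde{H}_\ell(\Delta(G[W]);k)=0$ for all $W$ and all $\ell\ge 1$" for a $2$-linear resolution, and then treat the degenerate cases (an isolated vertex, where $\lk(v)=\{\emptyset\}$) carefully in the Mayer--Vietoris step; the two implications themselves are then short. As an alternative for the direction "$G$ chordal $\Rightarrow$ $I(G^c)$ has a linear resolution" one can avoid homology altogether: fix a perfect elimination ordering $v_1<\cdots<v_n$ of $G$, order the generators $v_av_b$ ($a<b$) of $I(G^c)$ lexicographically by the pair $(b,a)$, and check that $I(G^c)$ has linear quotients with respect to this order. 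The only case needing thought is that of two disjoint generators $v_av_b$ and $v_cv_d$, where one combines the absence of an induced $C_4$ in $G$ with the fact that the neighbours of the least-indexed vertex involved form a clique to produce the required variable; since an equigenerated monomial ideal with linear quotients has a linear resolution, the claim follows.
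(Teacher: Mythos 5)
Your proof is correct. Note that the paper itself offers no proof of Theorem \ref{Fro1}: it is quoted as a known result of Fr\"oberg \cite{Fro}, so there is no internal argument to compare against. Your route --- identify $I(G^c)$ with the Stanley--Reisner ideal of the clique complex, translate ``$2$-linear resolution'' via Hochster's formula into the vanishing of $\widetilde{H}_\ell(\Delta(G[W]);k)$ for all $W$ and all $\ell\geq 1$, kill the non-chordal case with an induced cycle, and handle the chordal case by Mayer--Vietoris at a simplicial vertex --- is the standard modern proof, and your bookkeeping is right: the degree shift in Hochster's formula is correct, the terms with $|W|\leq\ell+1$ do vanish automatically, and the link of a simplicial vertex is a simplex (or $\{\emptyset\}$), so the connecting map in the reduced Mayer--Vietoris sequence causes no trouble even for an isolated vertex. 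It is worth observing that your alternative, homology-free argument for the direction ``$G$ chordal $\Rightarrow$ $I(G^c)$ has a linear resolution'' (linear quotients from a perfect elimination ordering) is essentially the strategy the paper does carry out, in the more general setting of $t$-clique ideals, in Theorem \ref{vs} and Corollary \ref{cor1}: there the simplicial vertex produces a vertex splitting $K_t(G^c)=K_t((G\setminus u)^c)+uK_{t-1}((G\setminus N_G[u])^c)$, and vertex splittability yields linear quotients. So your main proof buys the full equivalence (including the converse, which the splitting method does not address), while your sketched alternative is the one that generalizes to $t>2$.
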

The paper is organized as follows. In the first section we give some preliminaries which are needed in the sequel.
In Section 2, we introduce the $t$-clique ideal of a graph and study this ideal for some classes of graphs. First we show that the `only if' part  of Theorem \ref{Fro1} holds for any nonzero $t$-clique ideal too, but the `if' part does not hold in general for $t$-clique ideals (see Corollary \ref{cor1} and Example \ref{exam1}).
Indeed in Theorem \ref{vs}, it is shown that for a chordal graph $G$, any nonzero $t$- clique ideal $K_t(G^c)$ is a vertex splittable ideal and hence has linear quotients and a $t$-linear resolution. Moreover, in Corollary \ref{cor2}, for a chordal graph $G$ an inductive formula for the graded Betti numbers and the projective dimension of $K_t(G^c)$ is presented.
Then in Theorem \ref{reg}, for an arbitrary graph $G$ an upper bound for the regularity of $R/K_t(G)$ is given in terms of some data from $G$.
As the main results of this section, it is shown that  if $G$ is the complement of a path graph or a cycle graph, then the Stanley-Reisner simplicial complex of $K_t(G)$ is pure shellable and hence $R/K_t(G)$ is a Cohen-Macaulay ring (see Theorems \ref{pathlinear} and \ref{cyclelinear}).

In Section 3, we consider the $t$-independence ideal of a graph $G$, which is defined as $$J_t(G)=\bigcap_{\{x_{i_1},\ldots,x_{i_t}\}\in \Delta_G}(x_{i_1},\ldots,x_{i_t}),$$ where $\Delta_G$ is the independence complex of $G$. Using the results obtained in Section 2, we show that for a chordal graph $G$, the Stanley-Reisner simplicial complex of  $J_t(G)$ is pure vertex decomposable and $R/J_t(G)$ is a Cohen-Macaulay ring and $\T{pd}(R/J_t(G))=t$ (see Theorem \ref{pdc}).

Also in Corollaries \ref{pathlinear2} and \ref{cyclelinear2} it is shown that $J_t(P_n)$ and $J_t(C_n)$ have linear resolutions if they are nonzero, where $P_n$ and $C_n$ are path graph and cycle graph with $n$ vertices, respectively. In Corollary \ref{cor3}  the graded Betti numbers of $J_t(P_n)$ are explained with a recursive formula. Finally it is shown that for $n\geq 2t$, $\T{pd}(R/J_t(C_n))=2t-1$, which depends only on $t$ (see Theorem \ref{pdcycle}).

\section{Preliminaries}

Throughout this paper, we assume that $G$ is a simple graph with the vertex set $V(G)=\{x_1, \dots, x_n\}$ and the edge set $E(G)$ and $R=k[x_1,\ldots,x_n]$ is a polynomial ring over a field $k$. For a simplicial complex $\Delta$, the set of \textbf{facets} (maximal faces) of $\Delta$ is denoted by $\mathcal{F}(\Delta)$. In this section, we recall some preliminaries which are needed in the sequel.

For a simplicial complex $\Delta$, and a face $F\in \Delta$, the \textbf{dimension} of $F$ is defined as $\dim(F)=|F|-1$ and $\dim(\Delta)=\max\{\dim(F):\ F\in \Delta\}$.

For a graph $G$, the\textbf{ independence complex} of $G$ is defined as follows.
$$\Delta_G=\{F\subseteq V(G):\ e\nsubseteq F, \forall e\in E(G)\}.$$
Any element of $\Delta_G$ is called an \textbf{independent set} of $G$.

For a simplicial complex $\Delta$ and $F\in \Delta$, the \textbf{link} of $F$ in
$\Delta$ is defined as $$\T{lk}_{\Delta}(F)=\{G\in \Delta: G\cap
F=\emptyset, G\cup F\in \Delta\},$$ and the \textbf{deletion} of $F$ is the
simplicial complex $$\T{del}_{\Delta}(F)=\{G\in \Delta: G\cap
F=\emptyset\}.$$

\begin{defn}\label{1.1}
{\rm A simplicial complex $\Delta$ is  called \textbf{vertex decomposable} if
$\Delta$ is a simplex, or $\Delta$ contains a vertex $x$ such that
\begin{itemize}
\item[(i)] both $\T{del}_{\Delta}(x)$ and $\T{lk}_{\Delta}(x)$ are vertex decomposable, and
\item[(ii)] every facet of $\T{del}_{\Delta}(x)$ is a facet of $\Delta$.
\end{itemize}
A vertex $x$ which satisfies condition (ii) is called a
\textbf{shedding vertex} of $\Delta$.}
\end{defn}


\begin{defn}
{\rm A simplicial complex $\Delta$ is called \textbf{shellable} if there exists an ordering $F_1<\cdots<F_m$ on the
facets of $\Delta$
such that for any $i<j$, there exists a vertex
$v\in F_j\setminus F_i$ and  $\ell<j$ with
$F_j\setminus F_\ell=\{v\}$. We call $F_1,\ldots,F_m$ a \textbf{shelling} for
$\Delta$.}
\end{defn}

For a monomial ideal $I$, the unique set of minimal generators of $I$ is denoted by $\mathcal{G}(I)$.
A vertex splittable ideal was defined in \cite{MK} as follows.

\begin{defn}
{\rm
A monomial ideal $I$ in $R=k[X]$ is called \textbf{vertex splittable} if it can be obtained by the following recursive procedure.
\begin{itemize}
\item[(i)] If $u$ is a monomial and $I=(u)$, $I=(0)$ or $I=R$, then $I$ is a vertex splittable ideal.
\item[(ii)] If there is a variable $x\in X$ and vertex splittable ideals $I_1$ and $I_2$ of $k[X\setminus \{x\}]$ so that $I=xI_1+I_2$, $I_2\subseteq I_1$ and $\mathcal{G}(I)$ is the disjoint union of $\mathcal{G}(xI_1)$ and $\mathcal{G}(I_2)$, then $I$ is a vertex splittable ideal.
\end{itemize}
With the above notations if $I=xI_1+I_2$ is a vertex splittable ideal, then $xI_1+I_2$ is called a \textbf{vertex splitting} for $I$ and $x$ is called a \textbf{splitting vertex} for $I$.
}
\end{defn}

\begin{defn}\label{1.2}
{\rm
A monomial ideal $I$ in $R=K[x_1,\ldots,x_n]$ has \textbf{linear quotients} if there exists an ordering $f_1, \dots, f_m$ on the minimal generators of $I$ such that the colon ideal $(f_1,\ldots,f_{i-1}):_R(f_i)$ is generated by a subset of $\{x_1,\ldots,x_n\}$ for all $2\leq i\leq m$. We show this ordering by $f_1<\dots <f_m$ and we call it \textbf{an order of linear quotients} for $I$.
Also for any $1\leq i\leq m$, $\set_I(f_i)$ is defined as
$$\set_I(f_i)=\{x_k:\ x_k\in (f_1,\ldots, f_{i-1}) :_R (f_i)\}.$$
}
\end{defn}

\begin{thm}\cite[Corollary 2.7]{Leila}\label{Leila}
Let $I$ be a monomial ideal with linear quotients with the ordering $f_1<\cdots<f_m$ on the minimal generators of $I$.
Then $$\beta_{i,j}(I)=\sum_{\deg(f_t)=j-i} {|\set_I(f_t)|\choose i}.$$
\end{thm}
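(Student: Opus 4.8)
The plan is to construct the minimal graded free resolution of $I$ explicitly, by an iterated mapping cone, and then read off the bigraded Betti numbers from its shape; this is the method of Herzog and Takayama.

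For $1\le i\le m$ put $I_i=(f_1,\ldots,f_i)$ and $L_i=(f_1,\ldots,f_{i-1}):_R f_i$ (so $L_1=(0)$), and write $q_i=|\set_I(f_i)|$. By the linear quotients hypothesis $L_i$ is generated by the $q_i$ variables in $\set_I(f_i)$, so $R/L_i$ is resolved by the Koszul complex on those variables; hence, after the shift coming from the isomorphism $I_i/I_{i-1}\cong (R/L_i)(-\deg f_i)$, the module $I_i/I_{i-1}$ is resolved by a complex $\mathbb{G}^{(i)}_\bullet$ with $\mathbb{G}^{(i)}_n=R(-n-\deg f_i)^{\binom{q_i}{n}}$. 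Feeding $\mathbb{G}^{(i)}_\bullet$ and (inductively) a graded free resolution $\mathbb{F}^{(i-1)}_\bullet$ of $I_{i-1}$ into the mapping cone attached to the short exact sequence
$$0\longrightarrow I_{i-1}\longrightarrow I_i\longrightarrow I_i/I_{i-1}\longrightarrow 0$$
produces a graded free resolution $\mathbb{F}^{(i)}_\bullet$ of $I_i$ with $\mathbb{F}^{(i)}_n=\mathbb{F}^{(i-1)}_n\oplus \mathbb{G}^{(i)}_n$. Starting from $I_0=(0)$ (resolved by the zero complex) and iterating gives a graded free resolution $\mathbb{F}^{(m)}_\bullet$ of $I$ with
$$\mathbb{F}^{(m)}_n=\bigoplus_{t=1}^{m}\mathbb{G}^{(t)}_n=\bigoplus_{t=1}^{m}R\bigl(-n-\deg f_t\bigr)^{\binom{q_t}{n}}.$$

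The step I expect to be the main obstacle is proving that this iterated mapping cone is \emph{minimal}, equivalently that each comparison map entering the cones has all entries in $\fm$, equivalently that each of the short exact sequences above is a Betti splitting. Since every ideal and every map in sight is monomial, the comparison maps may be chosen with monomial entries, and one only has to rule out unit entries. At homological degree one this is immediate: a unit entry in the comparison map $\mathbb{G}^{(i)}_1\to \mathbb{F}^{(i-1)}_0$ would force $x f_i=f_t$ for some variable $x$ and some $t<i$, i.e.\ $f_i$ would properly divide the minimal generator $f_t$, which is impossible; moreover, by minimality of $\mathbb{F}^{(i-1)}_\bullet$ any two lifts differ by an $\fm$-multiple, so this computation is choice-independent. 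The higher homological degrees require the inductive argument of Herzog--Takayama, exploiting the monomial structure of $\mathbb{F}^{(i-1)}_\bullet$ together with the fact that $\mathcal{G}(I)$ is an antichain under divisibility; this is the technical core, and I would quote their result rather than redo it.

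Granting minimality, the displayed description of $\mathbb{F}^{(m)}_n$ gives $\beta_{n,d}(I)=\dim_k\bigl(\mathbb{F}^{(m)}_n\bigr)_d=\sum_{t:\,n+\deg f_t=d}\binom{q_t}{n}$. Renaming $(n,d)$ as $(i,j)$ and recalling $q_t=|\set_I(f_t)|$ yields
$$\beta_{i,j}(I)=\sum_{\deg f_t=j-i}\binom{|\set_I(f_t)|}{i},$$
as asserted. (For $i=0$ this reads $\beta_{0,j}(I)=\#\{t:\deg f_t=j\}$, consistent with $f_1,\ldots,f_m$ being a minimal system of generators.)
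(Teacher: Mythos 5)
First, note that the paper does not prove this statement at all: it is quoted verbatim from \cite[Corollary 2.7]{Leila}, so there is no in-paper proof to compare against, and your proposal has to stand on its own. Your overall strategy -- resolve $I_i/I_{i-1}\cong (R/L_i)(-\deg f_i)$ by a shifted Koszul complex, take iterated mapping cones, prove minimality, and read $\beta_{i,j}(I)$ off the ranks -- is the standard route (and essentially the route of the cited literature). The bookkeeping at the end is correct, and your homological-degree-one check (a unit entry would force $xf_i=f_t$ for some $t<i$, contradicting minimality of the generating set) is fine.

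The genuine gap is exactly where you flag it, and it cannot be closed by ``quoting Herzog--Takayama.'' A unit entry in the comparison map $\mathbb{G}^{(i)}_n\to\mathbb{F}^{(i-1)}_{n-1}$ can only occur against the strand of an earlier generator $f_t$ with $\deg f_t=\deg f_i+1$; hence in the equigenerated case minimality is automatic for degree reasons, and that easy case is precisely \cite[Lemma 1.5]{HT} (Lemma \ref{Faridi} above). For mixed degrees, however, Herzog and Takayama do not prove minimality of the iterated mapping cone in general: their explicit minimal resolution requires the additional hypothesis that the decomposition function of the chosen order is regular, which not every linear-quotients order satisfies. The statement that for an arbitrary linear-quotients order the cone is minimal in every homological degree -- equivalently, that $\beta_{i,j}(I_s)=\beta_{i,j}(I_{s-1})+\beta_{i,j}\bigl((R/L_s)(-\deg f_s)\bigr)$ at every step, i.e.\ that each step behaves like a Betti splitting -- is precisely the content of \cite[Corollary 2.7]{Leila}, the result you are asked to prove; so deferring it to ``their inductive argument'' is either an appeal to something that does not exist in \cite{HT} or circular. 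Your degree-one antichain argument does not extend naively, because in higher homological degrees the entries of the comparison map are not of the form ``write $xf_i$ in terms of earlier generators'' but involve lifts through the (inductively known, but combinatorially complicated) differentials of $\mathbb{F}^{(i-1)}_\bullet$. To complete the proof you need an actual argument here, for instance induction on $s$ using the long exact sequence of $\Tor$ associated to $0\to I_{s-1}\to I_s\to I_s/I_{s-1}\to 0$, showing the connecting maps $\Tor_{n}(k,I_s/I_{s-1})\to\Tor_{n-1}(k,I_{s-1})$ vanish in the relevant internal degrees, or a reduction via componentwise linearity of linear-quotients ideals to the equigenerated case.
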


Having linear quotients is a strong tool to determine some classes of ideals with linear resolution. The main result in this way is the following lemma.

\begin{lem}(See \cite[Lemma 1.5]{HT}.)\label{Faridi}
Let $I=(f_1, \dots, f_m)$ be a monomial ideal with linear quotients such that all the monomials $f_i$ are of the same degree. Then $I$ has a linear resolution.
\end{lem}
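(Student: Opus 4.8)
The plan is to argue by induction on the number $m$ of minimal generators, using an order of linear quotients $f_1<\cdots<f_m$, so that the main tool is a short exact sequence relating $R/I$ to $R/I'$ and to a colon ideal generated by variables. If $m=1$ then $I=(f_1)$ is principal with $\deg f_1=d$, and $0\to R(-d)\to I\to 0$ is already a $d$-linear resolution, so there is nothing to prove.

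For the inductive step put $I'=(f_1,\ldots,f_{m-1})$. Then $f_1<\cdots<f_{m-1}$ is an order of linear quotients for $I'$, whose minimal generators again all have the common degree $d:=\deg f_i$; by the induction hypothesis $R/I'$ has a $d$-linear resolution $\mathbf F_\bullet$, meaning $\mathbf F_0=R$ and $\mathbf F_i=R(-d-i+1)^{\beta_i(R/I')}$ for $i\ge1$. Next I would record the short exact sequence
\begin{equation*}
0\lo\bigl(R/L\bigr)(-d)\st{\cdot f_m}{\lo} R/I'\lo R/I\lo 0,\qquad L:=(f_1,\ldots,f_{m-1}):_R f_m,
\end{equation*}
in which the first map is multiplication by $f_m$; it is well defined and injective because the kernel of the homomorphism $R(-d)\to R/I'$ with $1\mapsto f_m$ is precisely $L(-d)$, and its image is $I/I'=\Ker(R/I'\to R/I)$. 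The key point, coming straight from the definition of linear quotients, is that $L$ is generated by a subset of the variables $\{x_1,\ldots,x_n\}$; hence $R/L$ is resolved by the Koszul complex $\mathbf G_\bullet$ on that set of variables, which is linear, so $\mathbf G_0=R$ and $\mathbf G_i=R(-i)^{\beta_i(R/L)}$ for $i\ge1$.

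Finally I would lift $\cdot f_m$ to a chain map $\mathbf G_\bullet(-d)\to\mathbf F_\bullet$ and pass to the mapping cone $\mathbf C_\bullet$, which (the first map being injective) is a free resolution of $R/I$ with $\mathbf C_0=R$ and
\begin{equation*}
\mathbf C_i=\mathbf F_i\oplus\mathbf G_{i-1}(-d)=R(-d-i+1)^{\beta_i(R/I')}\oplus R(-d-i+1)^{\beta_{i-1}(R/L)}\qquad(i\ge1),
\end{equation*}
using that $-(i-1)-d=-d-i+1$. Thus $\mathbf C_i$ is generated in degree $d+i-1$ for every $i\ge1$; since the minimal free resolution of $R/I$ is obtained from $\mathbf C_\bullet$ by deleting trivial summands, it has the same property, i.e.\ $\beta_{i,j}(R/I)=0$ for all $i\ge1$ and $j\ne i+d-1$. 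As $I$ is generated in degree $d$, this says exactly that $I$ has a $d$-linear resolution. I expect the only subtle steps to be writing down the short exact sequence and observing that $L$, being generated by variables, automatically carries a linear (Koszul) resolution; the remaining degree bookkeeping in the mapping cone works out only because all $f_i$ have the same degree $d$, which is what aligns $\mathbf F_i$ and $\mathbf G_{i-1}(-d)$ in the same internal degree. If one prefers to avoid mapping cones, the same induction goes through with the long exact $\Tor^R(k,-)$ sequence of the displayed short exact sequence, via $\beta_{i,j}(R/I)\le\beta_{i,j}(R/I')+\beta_{i-1,j-d}(R/L)$ and the vanishing ranges of the two terms on the right, or via the regularity estimate $\reg(R/I)\le\max\{\reg(R/I'),\reg((R/L)(-d))-1\}=d-1$.
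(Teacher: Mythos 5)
Your proof is correct: the short exact sequence $0\to (R/L)(-d)\to R/I'\to R/I\to 0$ with $L=(f_1,\ldots,f_{m-1}):_Rf_m$ generated by variables, the Koszul resolution of $R/L$, and the degree bookkeeping in the mapping cone all check out. The paper itself gives no proof (it only cites \cite[Lemma 1.5]{HT}), but your inductive mapping-cone argument is exactly the one in that reference, so this is essentially the same approach as the source.
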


For a $\mathbb{Z}$-graded $R$-module $M$, the \textbf{Castelnuovo-Mumford regularity} (or briefly regularity)
of $M$ is defined as
$$\T{reg}(M) = \max\{j-i: \ \beta_{i,j}(M)\neq 0\},$$
and the \textbf{projective dimension} of $M$ is defined as
$$\T{pd}(M) = \max\{i:\ \beta_{i,j}(M)\neq 0 \ \text{for some}\ j\},$$
where $\beta_{i,j}(M)$ is the $(i,j)$th graded Betti number of $M$.

The notion of Betti splitting for monomial ideals was introduced in \cite{FHV} as follows.
\begin{defn} \cite[Definition 1.1]{FHV}
{\rm
Let $I$, $J$ and $K$ be monomial ideals in $R$ such that $\mathcal{G}(I)$ is the disjoint union of $\mathcal{G}(J)$ and $\mathcal{G}(K)$.
Then $I=J+K$ is a \textbf{Betti splitting} if $$\beta_{i,j}(I)=\beta_{i,j}(J)+\beta_{i,j}(K)+\beta_{i-1,j}(J\cap K),$$
for all $i\in \N$ and degrees $j$.}
\end{defn}

When $I=J+K$ is a Betti splitting, important homological invariants of $I$ are related to those invariants of the smaller ideals (see \cite[Corollary 2.2]{FHV}).


For a squarefree monomial ideal $I=( x_{11}\cdots
x_{1n_1},\ldots,x_{t1}\cdots x_{tn_t})$, the \textbf{Alexander dual ideal} of $I$, denoted by
$I^{\vee}$, is defined as
$$I^{\vee}:=(x_{11},\ldots, x_{1n_1})\cap \cdots \cap (x_{t1},\ldots, x_{tn_t}).$$
One can see that $(I^{\vee})^{\vee}=I$.

For a simplicial complex $\Delta$, the Stanley-Reisner ideal associated to $\Delta$ is denoted by $I_{\Delta}$ and for a squarefree monomial ideal $I$, Stanley-Reisner simplicial complex associated to $I$ is denoted by $\Delta_I$.

For a simplicial complex $\Delta$ with the vertex set $X=\{x_1, \dots, x_n\}$, the \textbf{Alexander dual simplicial complex} associated to $\Delta$ is defined as $\Delta^{\vee}=\{X\setminus F:\ F\notin \Delta\}.$

For a subset $C\subseteq X$, by $x^C$ we mean the monomial $\prod_{x_i\in C} x_i$ in the ring $k[x_1, \dots, x_n]$.
One can see that
$(I_{\Delta})^{\vee}=(x^{F^c} \ : \ F\in \mathcal{F}(\Delta)),$
where $F^c=X\setminus F$.
Moreover, one can see that  $(I_{\Delta})^{\vee}=I_{\Delta^{\vee}}$.

The following theorem which was proved in \cite{T}, relates the projective dimension and the regularity of a
squarefree monomial ideal to its Alexander dual.

\begin{thm}(See \cite[Theorem 2.1]{T}.) \label{1.3}
Let $I$ be a squarefree monomial ideal. Then
$\T{pd}(I^{\vee})=\T{reg}(R/I)$.
\end{thm}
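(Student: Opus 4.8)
The plan is to reduce to the Stanley--Reisner setting and then to express both sides of the asserted identity in terms of the reduced simplicial homology (over $k$) of the links of a single simplicial complex. Since every squarefree monomial ideal is a Stanley--Reisner ideal, write $I=I_{\Delta}$ for a simplicial complex $\Delta$ on the vertex set $X=\{x_1,\ldots,x_n\}$; by the remarks preceding the statement, $I^{\vee}=I_{\Delta^{\vee}}$. We may assume $I$ is a nonzero proper ideal, the case $I=0$ --- where $\Delta$ is the full simplex, $I^{\vee}=R$, and both sides equal $0$ --- being immediate. The two ingredients I would use are Hochster's formula, in its ``restriction'' form for the multigraded Betti numbers of a Stanley--Reisner ring and in its ``local cohomology'' form, together with combinatorial Alexander duality, which over the field $k$ provides, for a simplicial complex $\Gamma$ on $m$ vertices, an isomorphism of $k$-vector spaces $\widetilde{H}_{i}(\Gamma^{\vee};k)\cong\widetilde{H}_{m-i-3}(\Gamma;k)$.

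For the left-hand side I would begin with the restriction form of Hochster's formula applied to $\Delta^{\vee}$, which expresses $\beta_{i,j}(R/I_{\Delta^{\vee}})$ through the groups $\widetilde{H}_{j-i-1}\bigl((\Delta^{\vee})|_{W};k\bigr)$ with $W\subseteq X$, $|W|=j$, and hence yields
\[
\pd(R/I_{\Delta^{\vee}})=\max\bigl\{\,|W|-d-1\ :\ \widetilde{H}_{d}\bigl((\Delta^{\vee})|_{W};k\bigr)\neq 0,\ W\subseteq X\,\bigr\}.
\]
The combinatorial heart of the matter is the identity, valid for every $W\subseteq X$ with complement $\overline{W}=X\setminus W$, that the restriction $(\Delta^{\vee})|_{W}$ coincides with the Alexander dual of the link $\T{lk}_{\Delta}(\overline{W})$ formed inside the vertex set $W$: a face $F\subseteq W$ belongs to $(\Delta^{\vee})|_{W}$ precisely when $X\setminus F=\overline{W}\cup(W\setminus F)\notin\Delta$, that is, when $W\setminus F\notin\T{lk}_{\Delta}(\overline{W})$. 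Feeding this into combinatorial Alexander duality on the vertex set $W$ gives $\widetilde{H}_{d}\bigl((\Delta^{\vee})|_{W};k\bigr)\cong\widetilde{H}_{|W|-d-3}\bigl(\T{lk}_{\Delta}(\overline{W});k\bigr)$; putting $e=|W|-d-3$ turns $|W|-d-1$ into $e+2$, and since $\T{lk}_{\Delta}(\overline{W})$ is the void complex --- hence reduced-acyclic --- unless $\overline{W}\in\Delta$, one obtains
\[
\pd(R/I_{\Delta^{\vee}})=\max\bigl\{\,e+2\ :\ \widetilde{H}_{e}\bigl(\T{lk}_{\Delta}(\sigma);k\bigr)\neq 0\ \text{ for some }\ \sigma\in\Delta\,\bigr\},
\]
so that $\pd(I^{\vee})=\pd(R/I_{\Delta^{\vee}})-1=\max\{\,e+1:\widetilde{H}_{e}(\T{lk}_{\Delta}(\sigma);k)\neq 0,\ \sigma\in\Delta\,\}$.

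For the right-hand side I would use the local cohomology form of Hochster's formula, which states that in multidegree $\mathbf{a}\le\mathbf{0}$ with $\sigma=\{x_{j}:a_{j}<0\}$ one has $\mathrm{H}^{i}_{\fm}(R/I_{\Delta})_{\mathbf{a}}\cong\widetilde{H}_{i-|\sigma|-1}(\T{lk}_{\Delta}(\sigma);k)$, all other multigraded components vanishing, together with the standard description $\reg(M)=\max\{\,i+a:\mathrm{H}^{i}_{\fm}(M)_{a}\neq 0\,\}$ of the Castelnuovo--Mumford regularity. For a fixed support $\sigma$ the total degree $\sum_{j}a_{j}$ is largest --- namely $-|\sigma|$ --- when every negative entry of $\mathbf{a}$ equals $-1$; combining this with the two facts just recalled gives $\reg(R/I_{\Delta})=\max\{\,i-|\sigma|:\widetilde{H}_{i-|\sigma|-1}(\T{lk}_{\Delta}(\sigma);k)\neq 0\,\}=\max\{\,e+1:\widetilde{H}_{e}(\T{lk}_{\Delta}(\sigma);k)\neq 0,\ \sigma\in\Delta\,\}$, which is exactly the expression found above for $\pd(I^{\vee})$. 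Comparing the two finishes the proof. I expect the only genuinely delicate point to be bookkeeping: pinning down the degree shift in combinatorial Alexander duality and reconciling it with the two index conventions in Hochster's formula, while treating the degenerate complexes --- the full simplex, the void complex $\{\,\}$, and the irrelevant complex $\{\emptyset\}$ --- carefully enough that the ``maximum over faces $\sigma\in\Delta$'' formulations are literally correct.
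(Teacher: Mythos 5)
The paper does not prove this statement; it is quoted from Terai's paper, so there is no internal argument to compare against. Your proposal is a correct and complete outline of the standard proof (essentially Terai's own route): the identification of $(\Delta^{\vee})|_{W}$ with the Alexander dual, inside the vertex set $W$, of $\T{lk}_{\Delta}(X\setminus W)$ is right, the index bookkeeping works out ($e+2$ for $\T{pd}(R/I_{\Delta^{\vee}})$ versus $e+1$ for $\T{pd}(I^{\vee})$ and for $\T{reg}(R/I_{\Delta})$), and the degenerate cases you flag (void links for $\sigma\notin\Delta$, the irrelevant complex contributing $\widetilde{H}_{-1}$ when $\sigma$ is a facet) all check consistently.
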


Note that since $\T{pd}(R/I)=\T{pd}(I)+1$, $\T{reg}(R/I)=\T{reg}(I)-1$ and $(I^{\vee})^{\vee}=I$, the above theorem implies that $\T{pd}(R/I)=\T{reg}(I^{\vee})$.

For a simple graph $G$, the \textbf{edge ideal} of $G$ is defined as the ideal $I(G)=(x_ix_j:\ \{x_i,x_j\}\in E(G))$. It is easy to see that $I(G)$ can be viewed as the Stanley-Reisner ideal of the simplicial complex
$\Delta_{G}$ i.e., $I(G)=I_{\Delta_G}$.


A graph $G$ is called \textbf{chordal}, if it contains no induced cycle of length greater than or equal to $4$. Also $G$ is called \textbf{co-chordal} if the complement graph $G^c$ is a chordal graph.
For a vertex $v\in V(G)$, the set of neighbours of $v$ is denoted by $N_G(v)$ and we set $N_G[v]=N_G(v)\cup\{v\}$. A vertex $v$ of $G$ is called a \textbf{simplicial vertex} if the induced subgraph of $G$ on $N_G[v]$ is a complete
graph.

A path graph with $n$ vertices is denoted by $P_n$ and a cycle graph with $n$ vertices is denoted by $C_n$.


\section{The $t$-Clique ideal of a graph}

In this section we introduce the $t$-clique ideal of a graph and study algebraic properties of $t$-clique ideal of some families of graphs like co-chordal graphs and the complements of path graphs and cycle graphs.

\begin{defn}
Let $G$ be a graph. Any complete subgraph of $G$ with $t$ vertices is called a $t$-clique of $G$. The $t$-clique ideal of $G$ is an ideal of $R$ defined as
$$K_t(G)=(x_{i_1}\cdots x_{i_t}:\ G[\{x_{i_1},\ldots, x_{i_t}\}]\ \textrm{is a t-clique}).$$

Note that $K_2(G)=I(G)$.
\end{defn}

In the following it is shown that the $t$-clique ideal of a co-chordal graph is a vertex splittable ideal.

\begin{thm}\label{vs}
Let $G$ be a chordal graph. Then $K_t(G^c)$ is a vertex splittable ideal for any positive integer $t$.
\end{thm}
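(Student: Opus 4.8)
The strategy is to induct on the number of vertices of $G$ and to find an appropriate splitting vertex using the combinatorial structure of chordal graphs, namely the existence of a simplicial vertex (a perfect elimination ordering). The first thing to note is that if $G^c$ has no $t$-clique at all then $K_t(G^c)=(0)$, which is vertex splittable by definition, so we may assume $K_t(G^c)\neq 0$. If $|V(G)|$ is small (for instance $|V(G)|=t$, in which case $K_t(G^c)$ is either $(0)$ or principal) the base case is immediate.

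Now let $v$ be a simplicial vertex of the chordal graph $G$, so $G[N_G[v]]$ is complete. The key observation is that a $t$-subset $S\subseteq V(G)$ induces a $t$-clique in $G^c$ if and only if $S$ is an independent set of $G$; and since $v$ is simplicial in $G$, a set containing $v$ is independent in $G$ precisely when it avoids all of $N_G(v)$. I would take $x:=v$ as the candidate splitting vertex and write $K_t(G^c)=xI_1+I_2$, where $I_2$ is generated by the monomials $x^S$ with $S$ a $t$-element independent set of $G$ not containing $v$ — equivalently $I_2=K_t((G\setminus v)^c)$ viewed in $k[X\setminus\{x\}]$ — and $xI_1$ collects the generators divisible by $x$, so $I_1$ is generated by the monomials $x^T$ with $T$ a $(t-1)$-element independent set of $G\setminus N_G[v]$; that is, $I_1=K_{t-1}((G\setminus N_G[v])^c)$ in $k[X\setminus\{x\}]$. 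By construction $\mathcal{G}(K_t(G^c))$ is the disjoint union of $\mathcal{G}(xI_1)$ and $\mathcal{G}(I_2)$ (a generator divisible by $x$ cannot be a minimal generator not divisible by $x$). Both $G\setminus v$ and $G\setminus N_G[v]$ are induced subgraphs of a chordal graph, hence chordal, so by the induction hypothesis $I_1$ and $I_2$ are vertex splittable ideals of $k[X\setminus\{x\}]$.

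It remains to verify the containment $I_2\subseteq I_1$. This is the heart of the argument: given a $t$-element independent set $S$ of $G$ with $v\notin S$, I must exhibit a $(t-1)$-element independent subset $T\subseteq S$ of $G$ with $T\cap N_G[v]=\emptyset$, so that $x^T\in I_1$ divides $x^S$. If $S\cap N_G[v]=\emptyset$, any $(t-1)$-subset of $S$ works. Otherwise, since $S$ is independent and $N_G[v]$ induces a complete subgraph of $G$, $S$ can contain at most one vertex of $N_G[v]$; delete that single vertex from $S$ to obtain the required $T$. Hence $I_2\subseteq I_1$, and $K_t(G^c)=xK_{t-1}((G\setminus N_G[v])^c)+K_t((G\setminus v)^c)$ is a vertex splitting.

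The main obstacle is precisely this last containment $I_2\subseteq I_1$, since it is what forces the choice of a simplicial vertex rather than an arbitrary one: the argument hinges on the fact that an independent set of $G$ meets the clique $N_G[v]$ in at most one vertex, which fails for a general vertex. Everything else — the base case, the disjointness of the generating sets, and the inductive closure under passing to induced subgraphs — is routine once the splitting is set up this way.
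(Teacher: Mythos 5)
Your proposal is correct and follows essentially the same route as the paper: induct on $|V(G)|$, pick a simplicial vertex $u$ (Dirac), write $K_t(G^c)=uK_{t-1}((G\setminus N_G[u])^c)+K_t((G\setminus u)^c)$, and verify the containment $K_t((G\setminus u)^c)\subseteq K_{t-1}((G\setminus N_G[u])^c)$ by observing that an independent set meets the clique $N_G[u]$ in at most one vertex. The paper's proof is the same argument, written out with both inclusions of the decomposition checked explicitly.
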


\begin{proof}
We prove the assertion by induction on $|V(G)|$. If $|V(G)|=1$, then for any integer $t>1$, one has $K_t(G^c)=0$ and $K_1(G^c)=(x)$, where $V(G)=\{x\}$ and the result is clear. Let $|V(G)|>1$ and inductively assume that for any chordal graph $G'$ with $|V(G')|<|V(G)|$, $K_t(G'^c)$ is vertex splittable for all positive integers $t$. Every chordal graph has a simplicial vertex due to Dirac \cite{Dirac}. Let $u$ be a simplicial vertex of $G$. So the induced subgraph of $G$ on the vertex set $N_G[u]$ is a complete graph. The minimal generators of $K_t(G^c)$ are the independent sets of $G$ of size $t$. Set $H=G\setminus N_G[u]$.
For any independent set $W$ of $G$ of size $t$, if $u\notin W$, then $x^W\in K_t((G\setminus u)^c)$ and if $u\in W$, then $N_G(u)\cap W=\emptyset$ and $W\setminus \{u\}\subseteq V(H)$ and $x^{W\setminus \{u\}}\in K_{t-1}(H^c)$. So
$K_t(G^c)\subseteq K_t((G\setminus u)^c)+uK_{t-1}(H^c)$. Also for any independent set $W'$ of $H$ of size $t-1$, $W'\cup\{u\}$ is an independent set of $G$ of size $t$, since $W'\cap N_G(u)=\emptyset$. So $ux^{W'}\in K_t(G^c)$. Also clearly $K_t((G\setminus u)^c)\subseteq K_t(G^c)$. So
$$K_t(G^c)=K_t((G\setminus u)^c)+uK_{t-1}(H^c).$$
Note that $G\setminus u$ and $H$ are both chordal graphs with fewer vertices than $G$. Thus by induction hypothesis $K_t((G\setminus u)^c)$ and $K_{t-1}(H^c)$ are vertex splittable ideals. Moreover, any minimal generator of $K_t((G\setminus u)^c)$ is of the form $x^F$, where $F\subseteq V(G)\setminus \{u\}$ is an independent set of $G\setminus u$ of size $t$. Thus
$|F\cap N_G(u)|\leq 1$, since $N_G(u)$ makes a clique. Therefore there exists some $w\in N_G(u)$ such that $F\setminus \{w\}$ is an independent set of $H$ of size at least $t-1$. So
$x^F\in K_{t-1}(H^c)$. Thus $K_t((G\setminus u)^c)\subseteq K_{t-1}(H^c)$ and $K_t((G\setminus u)^c)+uK_{t-1}(H^c)$ is a vertex splitting for $K_t(G^c)$ with the splitting vertex $u$.
\end{proof}

\begin{thm}\cite[Theorem 2.4]{MK}\label{vdlMK}
Any vertex splittable ideal has linear quotients.
\end{thm}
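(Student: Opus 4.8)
The plan is to induct on the number of variables $|X|$, equivalently on the length of the recursive construction producing $I$. In the base case $I=(u)$, $I=(0)$ or $I=R$, the minimal generating set $\mathcal{G}(I)$ has at most one element, so there are no colon ideals to check and any ordering is vacuously an order of linear quotients. For the inductive step I would take a vertex splitting $I=xI_1+I_2$, where $I_1,I_2$ are vertex splittable ideals of $k[X\setminus\{x\}]$, $I_2\subseteq I_1$, and $\mathcal{G}(I)$ is the disjoint union of $\mathcal{G}(xI_1)$ and $\mathcal{G}(I_2)$. By the induction hypothesis I may fix an order of linear quotients $g_1<\cdots<g_p$ for $I_1$ and $h_1<\cdots<h_q$ for $I_2$. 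Since $x$ divides none of the $g_i$ (they lie in $k[X\setminus\{x\}]$), the monomials $xg_1,\ldots,xg_p$ are distinct and minimal, so $\mathcal{G}(xI_1)=\{xg_1,\ldots,xg_p\}$ and hence $\mathcal{G}(I)=\{xg_1,\ldots,xg_p,h_1,\ldots,h_q\}$. The candidate order of linear quotients for $I$ is then $xg_1<\cdots<xg_p<h_1<\cdots<h_q$.

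To verify this I would split into two cases. For a generator $xg_i$ with $2\le i\le p$, I would use the identity $(m):(n)=(m/\gcd(m,n))$ for monomials to compute $(xg_j):(xg_i)=(g_j/\gcd(g_j,g_i))=(g_j):(g_i)$ for each $j<i$, whence $(xg_1,\ldots,xg_{i-1}):(xg_i)=(g_1,\ldots,g_{i-1}):(g_i)$, which is generated by a subset of the variables because $g_1<\cdots<g_p$ is an order of linear quotients for $I_1$. For a generator $h_j$ with $1\le j\le q$, since $(xg_1,\ldots,xg_p)=xI_1$ and colon distributes over sums, I would write
$$(xg_1,\ldots,xg_p,h_1,\ldots,h_{j-1}):(h_j)=\big((xI_1):(h_j)\big)+\big((h_1,\ldots,h_{j-1}):(h_j)\big).$$
The second summand is generated by variables because $h_1<\cdots<h_q$ is an order of linear quotients for $I_2$, so the whole point reduces to showing $(xI_1):(h_j)=(x)$.

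For that last identity: because $h_j\in\mathcal{G}(I_2)\subseteq I_2\subseteq I_1$, we get $xh_j\in xI_1$, so $x\in(xI_1):(h_j)$; conversely, if a monomial $m$ satisfies $mh_j\in xI_1\subseteq(x)$, then $x\mid mh_j$, and since $h_j\in k[X\setminus\{x\}]$ we have $x\nmid h_j$, forcing $x\mid m$. Hence $(xI_1):(h_j)=(x)$, so the colon ideal above is generated by $x$ together with a subset of the variables, which completes the inductive step. I expect the only genuinely subtle point to be this computation of $(xI_1):(h_j)$, and it is precisely here that both features of a vertex splitting are needed: the containment $I_2\subseteq I_1$ supplies the inclusion $x\in(xI_1):(h_j)$, while the fact that $I_1$ and $I_2$ live in the smaller ring $k[X\setminus\{x\}]$ — so $x\nmid h_j$ — supplies the reverse inclusion. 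Everything else is the routine bookkeeping that concatenating two orders of linear quotients, after multiplying the first block through by the common variable $x$, again yields an order of linear quotients.
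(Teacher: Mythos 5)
Your proof is correct and is essentially the argument behind \cite[Theorem 2.4]{MK}, which this paper quotes without proof: induct on the construction, list the generators of $xI_1$ first in the order inherited from a linear-quotients order of $I_1$, then those of $I_2$, and use $I_2\subseteq I_1$ together with $x\nmid h_j$ to get $(xI_1):(h_j)=(x)$. All the steps, including the reduction $(xg_1,\ldots,xg_{i-1}):(xg_i)=(g_1,\ldots,g_{i-1}):(g_i)$ and the identification $\mathcal{G}(xI_1)=x\,\mathcal{G}(I_1)$, check out.
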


Now, using  Theorems \ref{vs} and \ref{vdlMK} and Lemma \ref{Faridi} we have the following corollary, which is a generalization of Theorem \ref{Fro1} in some sense.

\begin{cor}\label{cor1}
Let $G$ be a chordal graph such that $K_t(G^c)\neq 0$. Then $K_t(G^c)$ has linear quotients and hence a $t$-linear resolution for any positive integer $t$.
\end{cor}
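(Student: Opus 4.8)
The plan is essentially to chain together the three preceding results, so the proof is short. First I would invoke Theorem~\ref{vs}: since $G$ is chordal and $K_t(G^c) \neq 0$, the ideal $K_t(G^c)$ is vertex splittable. Then by Theorem~\ref{vdlMK}, any vertex splittable ideal has linear quotients, so $K_t(G^c)$ has linear quotients. The only point that needs a moment of care is that having linear quotients does not by itself force a linear resolution in general — one needs the extra hypothesis that all minimal generators have the same degree. But here every minimal generator of $K_t(G^c)$ is of the form $x_{i_1}\cdots x_{i_t}$, i.e.\ a squarefree monomial of degree exactly $t$, so the generators are equidegree. Hence Lemma~\ref{Faridi} applies and gives that $K_t(G^c)$ has a linear resolution, and since the generators sit in degree $t$ this is a $t$-linear resolution.

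So the write-up I would produce is roughly: \emph{By Theorem~\ref{vs}, $K_t(G^c)$ is a vertex splittable ideal, so by Theorem~\ref{vdlMK} it has linear quotients. All of its minimal generators have degree $t$, so by Lemma~\ref{Faridi} it has a $t$-linear resolution.} I do not anticipate any real obstacle; all the substantive work was done in the proof of Theorem~\ref{vs} (the induction on the number of vertices using a simplicial vertex from Dirac's theorem and the recursion $K_t(G^c)=K_t((G\setminus u)^c)+uK_{t-1}(H^c)$). The phrase ``a generalization of Theorem~\ref{Fro1} in some sense'' is justified by taking $t=2$: then $K_2(G^c)=I(G^c)$, and the statement recovers the ``if'' half of Fröberg's theorem, namely that $I(G^c)$ has a linear resolution when $G$ is chordal.

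If I wanted to be slightly more self-contained I could also note why the equidegree hypothesis is exactly what Lemma~\ref{Faridi} requires, and that it is automatic from the definition of $K_t(G)$ as generated by squarefree monomials of degree $t$; no normalization of the generating set is needed since $\mathcal{G}(K_t(G^c))$ already consists of such monomials. That is the entire argument.

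\begin{proof}
By Theorem~\ref{vs}, since $G$ is chordal, $K_t(G^c)$ is a vertex splittable ideal. Hence by Theorem~\ref{vdlMK}, $K_t(G^c)$ has linear quotients. Moreover, every minimal generator of $K_t(G^c)$ is a squarefree monomial of degree $t$, so all the minimal generators have the same degree. Therefore, by Lemma~\ref{Faridi}, $K_t(G^c)$ has a linear resolution, and since its generators have degree $t$, this resolution is $t$-linear.
\end{proof}
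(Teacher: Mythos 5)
Your proof is correct and matches the paper exactly: the corollary is stated there as an immediate consequence of Theorem~\ref{vs}, Theorem~\ref{vdlMK}, and Lemma~\ref{Faridi}, in precisely the chain you describe, with the equidegree observation being the only (minor) point requiring care. Nothing to add.
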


\begin{thm}\cite[Theorem 2.8, Remark 2.10]{MK}\label{MKtheorem}
Let $I = xI_1 + I_2$ be a vertex splitting for the monomial ideal $I$.
Then $I = xI_1 + I_2$ is a Betti splitting. Moreover $$\beta_{i,j} (I) = \beta_{i,j-1}(I_1) + \beta_{i,j} (I_2) + \beta_{i-1,j-1}(I_2).$$
\end{thm}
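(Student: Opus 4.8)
The plan is to reduce both assertions to a single Betti-number identity and then prove that identity by writing down an explicit order of linear quotients for $I$ and applying Theorem~\ref{Leila}. Write $J=xI_1$ and $K=I_2$; since $\mathcal{G}(I)$ is by hypothesis the disjoint union of $\mathcal{G}(J)$ and $\mathcal{G}(K)$, the only thing left to verify for ``Betti splitting'' is the numerical equality. First I would pin down $J\cap K$: as $I_2\subseteq I_1$ one has $xI_2\subseteq xI_1\cap I_2$, and conversely any monomial $m\in xI_1\cap I_2$ is divisible by $x$ and by some $h\in\mathcal{G}(I_2)$; since $x\nmid h$ (because $I_2$ lives in $k[X\setminus\{x\}]$) we get $h\mid m/x$, hence $m\in xI_2$ and $J\cap K=xI_2$. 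Because $x$ is a variable not dividing any generator of $I_1$ or $I_2$, multiplication by $x$ identifies the minimal free resolution of $I_1$ (resp. $I_2$), with all internal degrees raised by one, with that of $xI_1$ (resp. $xI_2$), so $\beta_{i,j}(xI_1)=\beta_{i,j-1}(I_1)$ and $\beta_{i,j}(xI_2)=\beta_{i,j-1}(I_2)$. Substituting these into the defining equation of a Betti splitting turns it into precisely the displayed ``Moreover'' formula
$$\beta_{i,j}(I)=\beta_{i,j-1}(I_1)+\beta_{i,j}(I_2)+\beta_{i-1,j-1}(I_2),$$
so it is enough to prove this one identity.

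Next I would isolate the structural fact that drives the argument: \emph{every $h\in\mathcal{G}(I_2)$ is a proper multiple of some $g\in\mathcal{G}(I_1)$.} Indeed $h\in I_2\subseteq I_1$ forces $g\mid h$ for some $g\in\mathcal{G}(I_1)$; if $g=h$ then $h\in\mathcal{G}(I_1)$, so $xh\in\mathcal{G}(xI_1)\subseteq\mathcal{G}(I)$, while at the same time $h\in\mathcal{G}(I_2)\subseteq\mathcal{G}(I)$ properly divides $xh$, contradicting minimality of $\mathcal{G}(I)$. Since $I_1$ and $I_2$ are vertex splittable, Theorem~\ref{vdlMK} gives orders of linear quotients $g_1<\cdots<g_p$ for $\mathcal{G}(I_1)$ and $h_1<\cdots<h_q$ for $\mathcal{G}(I_2)$; I claim the concatenation $xg_1<\cdots<xg_p<h_1<\cdots<h_q$ is an order of linear quotients for $I$. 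For the $xg_t$ this is immediate: $(xg_1,\dots,xg_{t-1}):_R(xg_t)=(g_1,\dots,g_{t-1}):_R(g_t)$, whence $\set_I(xg_t)=\set_{I_1}(g_t)$. For $h_t$ I would split $(xg_1,\dots,xg_p,h_1,\dots,h_{t-1}):_R h_t = \bigl[(xg_1,\dots,xg_p):_R h_t\bigr]+\bigl[(h_1,\dots,h_{t-1}):_R h_t\bigr]$; the second summand equals $\langle\set_{I_2}(h_t)\rangle$ by the linear quotients of $I_2$, and for the first, $(xg_i):_R h_t=x\bigl(g_i/\gcd(g_i,h_t)\bigr)$ since $x\nmid h_t$, while the structural fact provides an index $k$ with $g_k\mid h_t$, so $(xg_k):_R h_t=(x)$ and therefore $(xg_1,\dots,xg_p):_R h_t=(x)$. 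Thus the colon ideal is $\langle x\rangle+\langle\set_{I_2}(h_t)\rangle$, generated by variables, with $\set_I(h_t)=\{x\}\sqcup\set_{I_2}(h_t)$ (a disjoint union, as $\set_{I_2}(h_t)$ avoids $x$) and $|\set_I(h_t)|=|\set_{I_2}(h_t)|+1$.

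Finally I would apply Theorem~\ref{Leila} to this order and split the sum over the two blocks of generators:
$$\beta_{i,j}(I)=\sum_{\deg g_t=(j-1)-i}\binom{|\set_{I_1}(g_t)|}{i}+\sum_{\deg h_t=j-i}\binom{|\set_{I_2}(h_t)|+1}{i},$$
the first sum being $\beta_{i,j-1}(I_1)$ by Theorem~\ref{Leila} for $I_1$. Expanding $\binom{a+1}{i}=\binom{a}{i}+\binom{a}{i-1}$ in the second sum and using $(j-1)-(i-1)=j-i$ rewrites it as $\beta_{i,j}(I_2)+\beta_{i-1,j-1}(I_2)$, again by Theorem~\ref{Leila} for $I_2$. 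This yields the required identity, hence simultaneously the Betti-splitting statement and the Betti-number formula. The crux is the middle paragraph: showing that the concatenated order has linear quotients with the stated sets $\set_I(-)$, and the colon computation $(xg_1,\dots,xg_p):_R h_t=(x)$ is the single spot where the vertex-splitting hypothesis ($I_2\subseteq I_1$ together with disjointness of $\mathcal{G}(xI_1)$ and $\mathcal{G}(I_2)$) genuinely enters; the rest is bookkeeping, and the degenerate cases ($I=(0)$, $I=R$, $I$ generated by one monomial, or $I_1$ or $I_2$ equal to $(0)$) are checked directly.
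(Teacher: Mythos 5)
Your proof is correct. The paper itself gives no argument for this statement---it is imported verbatim from \cite[Theorem 2.8, Remark 2.10]{MK}---so there is no in-paper proof to compare against; judged on its own, your reconstruction is sound and self-contained. The reduction of both claims to a single identity via $xI_1\cap I_2=xI_2$ and the degree shift $\beta_{i,j}(xI_\ell)=\beta_{i,j-1}(I_\ell)$ is clean, and the two places where something could genuinely go wrong are both handled: first, you correctly isolate the fact that some $g\in\mathcal{G}(I_1)$ divides each $h\in\mathcal{G}(I_2)$ (this is exactly where $I_2\subseteq I_1$ and the disjointness of $\mathcal{G}(xI_1)$ and $\mathcal{G}(I_2)$ enter, and it is what collapses $(xg_1,\dots,xg_p):_R h_t$ to $(x)$ and hence makes the concatenated order one of linear quotients); second, the disjointness $x\notin\set_{I_2}(h_t)$, needed for $|\set_I(h_t)|=|\set_{I_2}(h_t)|+1$, does hold because $I_2$ lives in $k[X\setminus\{x\}]$. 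Invoking Theorem \ref{vdlMK} to obtain the orders on $I_1$ and $I_2$ is not circular, since in \cite{MK} that result precedes the present one; indeed, your route through the concatenated linear-quotients order combined with the Sharifan--Varbaro formula (Theorem \ref{Leila}) is essentially the mechanism underlying the original source, packaged so that the Betti-splitting assertion and the numerical formula fall out simultaneously.
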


Thus using Theorems \ref{vs} and \ref{MKtheorem} and Corollary \ref{cor1} we get the following corollary.
\begin{cor}  \label{cor2}
Let $G$ be a chordal graph and $u$ be a simplicial vertex of $G$. Set $I=K_t(G^c)$, $J=K_t((G\setminus u)^c)$ and $K=K_{t-1}((G\setminus N_G[u])^c)$. Then  $I=J+uK$ is a Betti splitting. Moreover, if $I\neq 0$, then

\begin{itemize}
  \item [(i)] $\beta_{i,j}(I)=\beta_{i,j}(J)+\beta_{i-1,j-1}(J)+\beta_{i,j-1}(K)$,
  \item [(ii)] If $J\ne 0$, then $\T{pd}(I)=\max\{\T{pd}(J)+1,\T{pd}(K)\}$,
  \item [(iii)] $\T{reg}(R/I)=t-1$.
\end{itemize}
\end{cor}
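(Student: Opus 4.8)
The plan is to build everything on the decomposition $K_t(G^c)=K_t((G\setminus u)^c)+uK_{t-1}((G\setminus N_G[u])^c)$ established in the proof of Theorem \ref{vs}, together with the structural fact proved there that this is a vertex splitting. Write $I=K_t(G^c)$, $J=K_t((G\setminus u)^c)$ and $K=K_{t-1}((G\setminus N_G[u])^c)$; by the displayed identity in the proof of Theorem \ref{vs} we have $I=J+uK$ with $uK\cap J$ controlled and $\mathcal{G}(I)$ the disjoint union of $\mathcal{G}(uK)$ and $\mathcal{G}(J)$. Since this is a vertex splitting, Theorem \ref{MKtheorem} immediately gives that $I=J+uK$ is a Betti splitting and supplies the formula $\beta_{i,j}(I)=\beta_{i,j-1}(I_1)+\beta_{i,j}(I_2)+\beta_{i-1,j-1}(I_2)$ with $I_1=K$ and $I_2=J$; rewriting this with $I_1$ and $I_2$ in the roles dictated by the splitting $I=uK+J$ yields part (i), namely $\beta_{i,j}(I)=\beta_{i,j}(J)+\beta_{i-1,j-1}(J)+\beta_{i,j-1}(K)$. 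So part (i) is essentially a bookkeeping translation of Theorem \ref{MKtheorem}.

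For part (ii), I would read off the projective dimension from the Betti splitting formula of part (i). The term $\beta_{i,j}(J)$ contributes $\pd(J)$, the shifted term $\beta_{i-1,j-1}(J)$ contributes $\pd(J)+1$, and the term $\beta_{i,j-1}(K)$ contributes $\pd(K)$; hence $\pd(I)\le\max\{\pd(J)+1,\pd(K)\}$. For the reverse inequality one must check there is no cancellation: the three families of Betti numbers sit in different internal degrees or are governed by the disjointness $\mathcal{G}(I)=\mathcal{G}(uK)\sqcup\mathcal{G}(J)$, so the top homological degree is actually attained. This is exactly the content of \cite[Corollary 2.2]{FHV} (invariants of a Betti splitting), which the excerpt already alludes to, so I would cite that for the equality $\pd(I)=\max\{\pd(uK),\pd(J)\}=\max\{\pd(K),\pd(J)+1\}$, noting $\pd(uK)=\pd(K)$ since multiplication by the new variable $u$ is just a shift. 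The hypothesis $J\ne 0$ is what makes $\pd(J)+1$ meaningful (otherwise only the $K$ term survives).

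For part (iii), I would argue by induction on $|V(G)|$ that $\reg(R/K_t(G^c))=t-1$ whenever $K_t(G^c)\ne 0$. The base case is a single vertex or any graph where the only nonzero clique ideal is principal of degree $t$ (then $R/I$ has a $2$-term resolution and $\reg(R/I)=t-1$). For the inductive step, Corollary \ref{cor1} already tells us $K_t(G^c)$ has a $t$-linear resolution, so $\reg(I)=t$ and therefore $\reg(R/I)=t-1$ directly — in fact (iii) follows from Corollary \ref{cor1} alone without needing the Betti splitting, since a nonzero ideal with a $t$-linear resolution has regularity $t$. I would phrase it that way to keep the argument short: $K_t(G^c)\ne 0$ is generated in degree $t$ with linear quotients (Corollary \ref{cor1}), hence $\reg(R/K_t(G^c))=t-1$.

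The only genuine obstacle is the reverse inequality in (ii): one must be sure that the three groups of Betti numbers appearing in (i) do not overlap and cancel, so that the maximum homological degree is genuinely $\max\{\pd(J)+1,\pd(K)\}$ rather than something smaller. This is where I would lean on the Betti splitting machinery of \cite{FHV} (specifically that for a Betti splitting $I=J+uK$ one has $\pd(I)=\max\{\pd(uK),\pd(J)\}$ and, if additionally $J\cap uK\ne 0$ is accounted for, the $+1$ shift), rather than trying to prove non-cancellation by hand. Everything else is routine translation of already-cited results.
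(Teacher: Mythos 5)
Your proposal is correct and follows essentially the same route as the paper, which derives the corollary directly from the vertex splitting $I=J+uK$ of Theorem \ref{vs}, the Betti-splitting formula of Theorem \ref{MKtheorem} (with $I_1=K$, $I_2=J$), and the $t$-linear resolution from Corollary \ref{cor1} for part (iii). Your worry about cancellation in part (ii) is unnecessary: formula (i) is already an exact equality of non-negative terms, so the top homological degree is attained automatically (and the containment $J\subseteq K$ from the vertex splitting guarantees $K\neq 0$ whenever $J\neq 0$).
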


The converse of Corollary \ref{cor1} does not hold in general. In the following example, we give a non-chordal graph such that $K_t(G^c)$ has linear quotients and hence a linear resolution.

\begin{exam}\label{exam1}
Let $G$ be the graph depicted in the following figure. Then $K_3(G^c)=(x_1x_2x_3)$ has linear quotients and hence a $3$-linear resolution. But $G$ is not chordal.
\end{exam}

\begin{figure}[ht]\label{Fig1}
\centerline{ \centerline{\includegraphics[width=5cm]{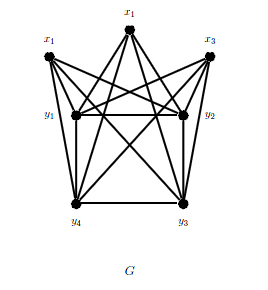}}}

\end{figure}

Let $c_t(G)$ be the minimum number of co-chordal subgraphs of $G$
required to cover the $t$-cliques of $G$, i.e., the minimum number of co-chordal subgraphs so that any $t$-clique of $G$ is contained in one of these subgraphs. Note that for $t=2$, $c_t(G)=\cochord(G)$ which was defined in \cite{W2}.
The following theorem, generalizes \cite[Theorem 1]{W2} with the similar proof.
\begin{thm}\label{reg}
Let $G$ be a graph. Then $\T{reg}(R/K_t(G))\leq (t-1) c_t(G)$.
\end{thm}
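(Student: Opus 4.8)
The plan is to mimic the proof of Frobenius/Woodroofe's bound \cite[Theorem 1]{W2} for the regularity of edge ideals, replacing edges by $t$-cliques throughout. The starting point is to write $c = c_t(G)$ and fix co-chordal subgraphs $G_1,\dots,G_c$ of $G$ whose union covers all the $t$-cliques of $G$; without loss of generality we may assume $V(G_i) = V(G)$ for each $i$ by adding isolated vertices, so that each $G_i$ is still co-chordal. Then $K_t(G_i)$ is (by definition of covering) a $t$-clique ideal of a co-chordal graph, hence by Corollary \ref{cor1} it is zero or has a $t$-linear resolution, and in particular $\T{reg}(K_t(G_i)) \le t$, i.e. $\T{reg}(R/K_t(G_i)) \le t-1$. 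Moreover, since the $G_i$ cover the $t$-cliques of $G$, every minimal generator of $K_t(G)$ is a minimal generator of some $K_t(G_i)$, so
$$K_t(G) = K_t(G_1) + \cdots + K_t(G_c).$$

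First I would handle the base case $c=1$, which is exactly the statement $\T{reg}(R/K_t(G_1)) \le t-1$ just recalled. For the inductive step I would split off one summand: set $I = K_t(G_1) + \cdots + K_t(G_{c-1})$ and $L = K_t(G_c)$, so $K_t(G) = I + L$. The standard tool is the short exact sequence
$$0 \longrightarrow \frac{R}{I \cap L} \longrightarrow \frac{R}{I} \oplus \frac{R}{L} \longrightarrow \frac{R}{I+L} \longrightarrow 0,$$
which gives $\T{reg}(R/(I+L)) \le \max\{\T{reg}(R/I),\ \T{reg}(R/L),\ \T{reg}(R/(I\cap L)) - 1\}$. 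Here $I$ is the sum of $c-1$ of the pieces, so by induction $\T{reg}(R/I) \le (t-1)(c-1) \le (t-1)c$, and $\T{reg}(R/L) \le t-1 \le (t-1)c$. Thus everything reduces to bounding $\T{reg}(R/(I\cap L))$, and for that I need $\T{reg}(R/(I \cap L)) \le (t-1)c + 1$, equivalently $\T{reg}(I \cap L) \le (t-1)c + 2$.

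The main obstacle, and the place where the argument needs the most care, is controlling $I \cap L$. Following Woodroofe, the key observation is that $L = K_t(G_c)$ is generated in degree $t$, so $I \cap L$ is generated in degrees $\ge t$, and in fact one shows $I \cap L = x^{T}\!\cdot(\text{something})$-type relations force $\T{reg}(I\cap L) \le \T{reg}(I) + t$. More precisely, I would argue that $I \cap L$, as a module, is generated by elements of the form $\T{lcm}(f,g)$ with $f \in \mathcal{G}(I)$ and $g \in \mathcal{G}(L)$; since $\deg g = t$, each such generator is $f$ times a squarefree monomial of degree at most $t$ dividing it, which yields a surjection from a direct sum of shifted copies of $I$ (shifted by at most $t$) onto $I\cap L$ — more cleanly, one uses the inclusion $I\cap L \subseteq (x^{V(G)\setminus \text{supp}} )$ arguments of \cite[Lemma 2]{W2}, or simply invokes the general fact that $\T{reg}(I\cap L) \le \T{reg}(I) + \T{reg}(L)$ is \emph{false} in general but $\T{reg}(I\cap J) \le \T{reg}(I) + t$ holds when $J$ is generated in degree $t$ and suitable $\T{Tor}$-vanishing is available. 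The cleanest route is to cite the inequality from Woodroofe's proof essentially verbatim: $\T{reg}(R/(I\cap L)) \le \T{reg}(R/I) + t$. Combining, $\T{reg}(R/(I\cap L)) - 1 \le (t-1)(c-1) + t - 1 = (t-1)c$, and then the short exact sequence above gives $\T{reg}(R/K_t(G)) \le (t-1)c = (t-1)c_t(G)$, completing the induction. I would flag that the only nontrivial input beyond bookkeeping is this regularity-of-intersection estimate, which is where one genuinely reuses the machinery of \cite{W2} rather than anything specific to $t$-cliques.
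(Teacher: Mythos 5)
Your setup is the same as the paper's: write $K_t(G)=\sum_{i=1}^{c}K_t(G_i)$ for a co-chordal cover of the $t$-cliques and use Corollary \ref{cor1}/Corollary \ref{cor2} to get $\T{reg}(R/K_t(G_i))\leq t-1$ for each piece. The divergence, and the gap, is in how you combine the pieces. The paper (exactly as in the proof of \cite[Theorem 1]{W2}) does not use the Mayer--Vietoris sequence at all: it applies, iteratively, the subadditivity of regularity for sums of squarefree monomial ideals, $\T{reg}(R/(I+J))\leq \T{reg}(R/I)+\T{reg}(R/J)$, which is the theorem of Kalai and Meshulam that Woodroofe records as his Lemma~2; this immediately gives $\T{reg}(R/K_t(G))\leq \sum_i \T{reg}(R/K_t(G_i))=(t-1)c$. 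Your proposal instead runs the short exact sequence $0\to R/(I\cap L)\to R/I\oplus R/L\to R/(I+L)\to 0$ and reduces everything to the claim $\T{reg}(R/(I\cap L))\leq \T{reg}(R/I)+t$. That claim is exactly as deep as the Kalai--Meshulam inequality (the two are equivalent via the same exact sequence), and the justification you sketch for it does not work: the fact that $I\cap L$ is generated by the monomials $\mathrm{lcm}(f,g)$ with $f\in\mathcal{G}(I)$, $g\in\mathcal{G}(L)$, each of degree at most $\deg f+t$, only bounds the degrees of the \emph{generators} of $I\cap L$, which says nothing about its regularity; and a surjection from a direct sum of shifted copies of $I$ onto $I\cap L$ gives no upper bound on $\T{reg}(I\cap L)$ without control of the kernel of that surjection. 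There is also no ``intersection inequality'' in Woodroofe's proof to cite verbatim --- what he cites is precisely the Kalai--Meshulam sum inequality, whose proof is a genuinely nontrivial topological argument via Leray numbers, not lcm bookkeeping.

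So the missing ingredient is a correct citation of $\T{reg}(R/(I+J))\leq \T{reg}(R/I)+\T{reg}(R/J)$ for squarefree monomial ideals; once you invoke that, your induction (or a direct summation) closes and reproduces the paper's one-line argument. Two minor remarks: the paper observes that each $K_t(H_i)\neq 0$ by minimality of the cover, so that Corollary \ref{cor2}(iii) applies and gives $\T{reg}(R/K_t(H_i))=t-1$ exactly (your ``zero or $t$-linear'' phrasing also handles this); and your reduction to $V(G_i)=V(G)$ is harmless but unnecessary, since the graded Betti numbers of a monomial ideal do not change under extension of the polynomial ring by unused variables.
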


\begin{proof}
Let $H_1,\ldots,H_{c_t(G)}$ be co-chordal subgraphs of $G$ which cover the $t$-cliques of $G$. By Corollary \ref{cor2}, for any $1\leq i\leq c_t(G)$,  $\T{reg}(R/K_t(H_i))=t-1$ ( note that $K_t(H_i)\neq 0$, otherwise one can remove $H_i$ from the subgraphs and get $c_t(G)-1$ co-chordal subgraphs covering the $t$-cliques of $G$). We have $K_t(G)=\sum_{i=1}^{c_t(G)} K_t(H_i)$.
So $\T{reg}(R/K_t(G))\leq \sum_{i=1}^{c_t(G)} \T{reg}(R/K_t(H_i))=\sum_{i=1}^{c_t(G)}(t-1)=(t-1) c_t(G)$.
\end{proof}


In the sequel, we study $K_t(G)$, when $G$ is the complement of a path graph $P_n$ or a cycle graph $C_n$. We show that for these families of graphs $R/K_t(G)$ is a Cohen-Macaulay ring.
The following theorem shows that the Stanley-Reisner simplicial complex of $K_t(P_n^c)$ is pure shellable.
\begin{thm}\label{pathlinear}
Let $n$ and $t$ be positive integers. Then $\Delta_{K_t(P_n^c)}$ is pure shellable and hence $R/K_t(P_n^c)$ is Cohen-Macaulay. Also
$\dim(\Delta_{K_t(P_n^c)})=\left\{
\begin{array}{ll}
n-1  & \hbox{if}\ n<2t-1 \\
2t-3 & \hbox{if}\ n\geq 2t-1.
\end{array}
\right.$
\end{thm}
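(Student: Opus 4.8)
The plan is to prove the stronger assertion that $\Delta:=\Delta_{K_t(P_n^c)}$ is \emph{pure vertex decomposable}; since vertex decomposable complexes are shellable and pure shellable simplicial complexes are Cohen--Macaulay, this yields the theorem. I would induct on $n$, with the statement understood for all $t\geq 1$ simultaneously, and I would use $x_1$ as shedding vertex.

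First I would dispose of the degenerate ranges. A $t$-clique of $P_n^c$ is precisely an independent set of size $t$ in $P_n$, and $\alpha(P_n)=\lceil n/2\rceil$, so $K_t(P_n^c)=0$ exactly when $n<2t-1$; in that case $\Delta$ is the full $(n-1)$-simplex and the theorem is trivial with $\dim\Delta=n-1$. The case $t=1$ is also immediate, as $K_1(P_n^c)=(x_1,\dots,x_n)$ and $\Delta=\{\emptyset\}$. For the remaining parameters note that for any $F\subseteq V(P_n)$ the induced graph $P_n[F]$ is a disjoint union of paths on $a_1,\dots,a_k$ vertices, and $F\in\Delta$ iff $\sum_i\lceil a_i/2\rceil\leq t-1$; since $\sum_i\lceil a_i/2\rceil\geq |F|/2$ this forces $|F|\leq 2t-2$, while $\{x_1,\dots,x_{2t-2}\}$ is a face whenever $n\geq 2t-1$, so $\dim\Delta=2t-3$ in that range.

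For the inductive step assume $n\geq 2t-1$ and $t\geq 2$, so $K_t(P_n^c)\neq 0$ and $\Delta$ is not a simplex. The deletion is $\del_\Delta(x_1)=\Delta_{K_t(P_{n-1}^c)}$ on $\{x_2,\dots,x_n\}$ (a face avoiding $x_1$ is a subset of $\{x_2,\dots,x_n\}$ containing no $t$-clique of $(P_{n-1})^c$), which by induction is pure vertex decomposable of dimension $2t-3$ (a full simplex of that dimension when $n=2t-1$); since every face of $\Delta$ has at most $2t-2$ vertices, no $(2t-2)$-element facet of the deletion can be enlarged by $x_1$, so every facet of $\del_\Delta(x_1)$ is a facet of $\Delta$, which is the shedding condition. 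For the link I would use the identity $(K_t(P_n^c):x_1)=K_{t-1}(P_{n-2}^c)$, with $P_{n-2}$ the path on $x_3,\dots,x_n$: the generators $x^W$ with $x_1\in W$ satisfy $x_2\notin W$ and, divided by $x_1$, give exactly the generators of $K_{t-1}(P_{n-2}^c)$, while the generators with $x_1\notin W$ lie in $K_t(P_{n-1}^c)$, which is contained in $K_{t-1}(P_{n-2}^c)$ by the inclusion established inside the proof of Theorem~\ref{vs}. Hence $\lk_\Delta(x_1)=\Delta_{K_{t-1}(P_{n-2}^c)}$ computed in $k[x_2,\dots,x_n]$, and since $x_2$ divides no minimal generator this is the cone $\langle x_2\rangle * \Delta_{K_{t-1}(P_{n-2}^c)}$ over the complex on $\{x_3,\dots,x_n\}$; the latter is, by induction, pure vertex decomposable of dimension $2(t-1)-3$ (note $n-2\geq 2(t-1)-1$), so the cone is pure vertex decomposable of dimension $2t-4$. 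Thus $x_1$ is a shedding vertex whose deletion and link are pure vertex decomposable of dimensions $2t-3$ and $2t-4$, whence $\Delta$ is pure vertex decomposable of dimension $2t-3$, closing the induction and giving pure shellability and Cohen--Macaulayness.

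The step I expect to demand the most care is the identification of $\lk_\Delta(x_1)$: one must check that $(K_t(P_n^c):x_1)$ genuinely collapses onto $K_{t-1}(P_{n-2}^c)$ — which hinges on the containment $K_t(P_{n-1}^c)\subseteq K_{t-1}(P_{n-2}^c)$ from Theorem~\ref{vs} — and that the resulting complex really is a cone, together with the routine but easily-forgotten verification of the shedding condition and of consistency at the boundary parameters $n=2t-1$, $t=1$, and small $n$. A self-contained alternative would be to avoid induction and instead write down an explicit shelling: the facets of $\Delta$ are exactly the vertex sets of the $(t-1)$-edge matchings of $P_n$, and ordering these matchings so that moving an edge to the left always produces an earlier facet (a colexicographic-type order) ought to be a shelling, but managing the restriction faces there is more delicate than the recursive deletion/link argument above.
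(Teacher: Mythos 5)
Your argument is correct, but it is genuinely different from the paper's. The paper works from the end vertex $x_n$: it shows directly that the facets of $\Delta_{n,t}=\Delta_{K_t(P_n^c)}$ avoiding $x_n$ are exactly the facets of $\Delta_{n-1,t}$, that every facet containing $x_n$ also contains $x_{n-1}$ and equals $G\cup\{x_{n-1},x_n\}$ for a facet $G$ of $\Delta_{n-2,t-1}$, and then concatenates shellings of these two families, verifying the shelling condition by hand for the mixed pairs. You instead prove the stronger statement that $\Delta_{K_t(P_n^c)}$ is pure \emph{vertex decomposable}, shedding at $x_1$, with $\T{del}_\Delta(x_1)=\Delta_{K_t(P_{n-1}^c)}$ and $\T{lk}_\Delta(x_1)=\Delta_{(K_t(P_n^c):x_1)}=\langle x_2\rangle *\Delta_{K_{t-1}(P_{n-2}^c)}$; the colon computation is right, and the collapse $(K_t(P_n^c):x_1)=K_{t-1}(P_{n-2}^c)$ does follow from the containment $K_t(P_{n-1}^c)\subseteq K_{t-1}(P_{n-2}^c)$ exactly as in the proof of Theorem \ref{vs} applied to the simplicial vertex $x_1$. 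The two recursions are mirror images of one another (the paper's description of the facets containing $x_n$ is precisely the assertion that the link of $x_n$ is the cone with apex $x_{n-1}$ over $\Delta_{n-2,t-1}$), so the combinatorial content is the same; what differs is the packaging. Your route buys a stronger conclusion — vertex decomposability of $\Delta_{K_t(P_n^c)}$ itself, not just shellability, and not just vertex decomposability of its Alexander dual as in Theorem \ref{pdc} — at the cost of invoking three standard external facts the paper never states (pure vertex decomposable implies pure shellable; cones preserve pure vertex decomposability; $\T{lk}_{\Delta_I}(x)=\Delta_{(I:x)}$), which you should cite or prove. The paper's explicit shelling order, by contrast, is reused downstream in Theorems \ref{cor3} and \ref{pdcycle} to compute $\set$-sizes and graded Betti numbers, so if you adopt your proof you would still need to extract an explicit ordering (e.g.\ the one induced by your recursive decomposition) for those later arguments. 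Your closing parenthetical that the facets are the vertex sets of the $(t-1)$-edge matchings of $P_n$ is in fact correct (a facet induces a disjoint union of even paths), but the colexicographic shelling you sketch there is not needed.
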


\begin{proof}
Let $P_n:x_1,\ldots,x_n$ be a path. Set $\Delta_{n,t}=\Delta_{K_t(P_n^c)}$.
For a subset $F\subseteq \{x_1,\ldots,x_n\}$, one has
 $$F\in \Delta_{n,t} \Longleftrightarrow   F\ \textrm{contains no independent set of}\ P_n\ \emph{of size t}.$$
Note that $P_n$  has an independent set of size $t$ if and only if $n\geq 2t-1$. In other words $K_t(P_n^c)\neq 0$ if and only if $n\geq 2t-1$. So if $n<2t-1$, then $K_t(P_n^c)=0$ and $\Delta_{K_t(P_n^c)}=\langle\{x_1,\ldots,x_n\}\rangle$ is a simplex. So it is pure shellable of dimension $n-1$.

So we assume that $n\geq 2t-1$. Any subset of $\{x_1,\ldots,x_n\}$ of size $2t-1$, contains an independent set of $P_n$ of size $t$. Indeed, for any set $\{x_{i_1},x_{i_2},x_{i_3},\ldots,x_{i_{2t-1}}\}$ of size $2t-1$, where $i_1<i_2<\cdots<i_{2t-1}$, $\{x_{i_1},x_{i_3},x_{i_5},\ldots,x_{i_{2t-1}}\}$ is an independent set of $P_n$ of size $t$.  Thus for any $F\in \mathcal{F}(\Delta_{n,t})$, one has $|F|\leq 2t-2$ and then $\dim(\Delta_{n,t})\leq 2t-3$.
If $t=1$, then clearly $\Delta_{n,t}=\{\emptyset\}$ and $\dim(\Delta_{n,t})=-1=2t-3$.
By induction on $n$, we show that for any positive integers $n$ and $t$ with $n\geq 2t-1$,  $\Delta_{n,t}$ is pure shellable of dimension $2t-3$. As was discussed above, we can assume that $t\geq 2$. For $n=3$, $\Delta_{3,2}=\langle \{x_1,x_2\},\{x_2,x_3\}\rangle$, which is pure of dimension $2t-3=1$ and $\{x_1,x_2\}<\{x_2,x_3\}$ is a shelling for it. Let $n>3$ and assume inductively that $\Delta_{m,t'}$ is pure shellable of dimension $2t'-3$ for any positive integers $m$ and $t'$ with $m\geq 2t'-1$.

Let $F\in \mathcal{F}(\Delta_{n,t})$. If $x_n\notin F$, then clearly $F\in \mathcal{F}(\Delta_{n-1,t})$. Note that $n-1\geq 2t-2$. If $n-1=2t-2$, then as was discussed in the first case $\Delta_{n-1,t}$ is pure shellable of dimension $n-2=2t-3$ and if $n-1\geq 2t-1$, then by induction hypothesis $\Delta_{n-1,t}$ is pure shellable of dimension $2t-3$.
Thus for any $F'\in \mathcal{F}(\Delta_{n-1,t})$, $|F'|=2t-2$ and then $F'\in \mathcal{F}(\Delta_{n,t})$ too. Therefore the facets of $\Delta_{n,t}$ which does not contain $x_n$, are precisely the facets of $\Delta_{n-1,t}$.

Now, let $F\in \mathcal{F}(\Delta_{n,t})$ and $x_n\in F$. We claim that $x_{n-1}\in F$. Otherwise $F\cup \{x_{n-1}\}\notin\Delta_{n,t}$ and then there is an independent set $S$ of $P_n$ of size $t$ such that  $S\subseteq F\cup \{x_{n-1}\}$ and $x_{n-1}\in S$. Clearly $S'=(S\setminus \{x_{n-1}\})\cup \{x_n\}$ is again an independent set of $P_n$ of size $t$, and $S'\subseteq F$, which is a contradiction. So the claim is proved and $\{x_{n-1},x_n\}\subseteq F$. Since $F$ contains no independent set of $P_n$ of size $t$, so $F'=F\setminus \{x_{n-1},x_n\}$ contains no independent set of $P_{n-2}$ of size $t-1$ (otherwise adding $x_n$ to this set, makes an independent set of $P_n$ of size $t$ contained in $F$). So $F'\in \Delta_{n-2,t-1}$.  Conversely, let $F'\in \mathcal{F}(\Delta_{n-2,t-1})$ be a facet. We show that $F'\cup\{x_{n-1},x_n\}\in \mathcal{F}(\Delta_{n,t})$. By contradiction let $S\subseteq F'\cup\{x_{n-1},x_n\}$ be an independent set of $P_n$ of size $t$. Then $|\{x_{n-1},x_n\}\cap S|=1$. Without loss of generality assume that $x_{n-1}\in S$ and $x_n\notin S$. Then $S\setminus \{x_{n-1}\}\subseteq F'$ is an independents set of $P_{n-2}:x_1,x_2,\ldots,x_{n-2}$ of size $t-1$, which is a contradiction. So $F'\cup\{x_{n-1},x_n\}$ is a face of $\Delta_{n,t}$. Since $n-2\geq 2(t-1)-1$, so by induction hypothesis $\Delta_{n-2,t-1}$ is pure shellable of dimension $2(t-1)-3=2t-5$. Thus $|F'\cup\{x_{n-1},x_n\}|=|F'|+2=2t-4+2=2t-2$. Thus $F'\cup\{x_{n-1},x_n\}\in \mathcal{F}(\Delta_{n,t})$.
Therefore the facets of $\Delta_{n,t}$ which contain $x_n$ are $G_1\cup\{x_{n-1},x_n\},\ldots,G_s\cup\{x_{n-1},x_n\}$, where $\Delta_{n-2,t-1}=\langle G_1,\ldots,G_s\rangle$.

Thus if $\Delta_{n-1,t}=\langle F_1,\ldots,F_r\rangle$, then  $$\Delta_{n,t}=\langle F_1,\ldots,F_r, G_1\cup\{x_{n-1},x_n\},\ldots,G_s\cup\{x_{n-1},x_n\}\rangle.$$

Since $|F_i|=|G_j\cup\{x_{n-1},x_n\}|=2t-2$ for any $1\leq i\leq r$ and $1\leq j\leq s$, so $\Delta_{n,t}$ is pure of dimension $2t-3$.
Assume inductively that $F_1<\cdots<F_r$ and $G_1<\cdots<G_s$ are shellings for $\Delta_{n-1,t}$ and $\Delta_{n-2,t-1}$, respectively.
We claim that $$F_1<\cdots<F_r<G_1\cup\{x_{n-1},x_n\}<\cdots<G_s\cup\{x_{n-1},x_n\}$$ is a shelling for $\Delta_{n,t}$.
To prove the claim it is enough to check the definition of shellability for two facets of the form $F_i$ and $G_j\cup\{x_{n-1},x_n\}$. One has $x_n\in G_j\cup\{x_{n-1},x_n\}\setminus F_i$. Moreover, $G_j\cup\{x_{n-1}\}$ contains no independent set of $P_{n-1}$ of size $t$, otherwise $G_j$ contains an independent set of $P_{n-2}$ of size $t-1$, which is a contradiction. Thus
$G_j\cup\{x_{n-1}\}\in \Delta_{n-1,t}$ and then $G_j\cup\{x_{n-1}\}\subseteq F_l$ for some $1\leq l\leq r$. So $G_j\cup\{x_{n-1},x_n\}\setminus F_l=\{x_n\}$.
\end{proof}

Now, we study the ideal $K_t(C_n^c)$, for the cycle graph $C_n$.

\begin{lem}\label{dimcycle}
Let $n$ and $t$ be positive integers. Then $\Delta_{K_t(C_n^c)}$ is a pure simplicial complex and
$\dim(\Delta_{K_t(C_n^c)})=\left\{
\begin{array}{ll}
n-1  & \hbox{if}\ n<2t \\
2t-3 & \hbox{if}\ n\geq 2t.
\end{array}
\right.$
\end{lem}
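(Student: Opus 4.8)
\subsection*{Proof proposal}

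The plan is to restate everything combinatorially: a subset $F\subseteq\{x_1,\dots,x_n\}$ is a face of $\Delta_{K_t(C_n^c)}$ exactly when $x^F\notin K_t(C_n^c)$, i.e.\ exactly when $F$ contains no independent set of $C_n$ of size $t$. Since the independence number of $C_n$ is $\lfloor n/2\rfloor$, if $n<2t$ then $K_t(C_n^c)=0$, hence $\Delta_{K_t(C_n^c)}$ is the full simplex on $\{x_1,\dots,x_n\}$ and is pure of dimension $n-1$, as claimed. From now on assume $n\ge 2t$; in particular the whole vertex set is not a face.

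Next I would describe faces through their blocks of consecutive vertices. Let $F\subsetneq\{x_1,\dots,x_n\}$, and write $F$ as the disjoint union of its maximal arcs (runs of cyclically consecutive vertices) $A_1,\dots,A_k$ of lengths $\ell_1,\dots,\ell_k$. Distinct arcs are separated by at least one vertex outside $F$, so vertices in different arcs are non-adjacent in $C_n$, and each $A_i$ induces a path $P_{\ell_i}$; hence the largest independent set of $C_n$ contained in $F$ has size $\sum_{i=1}^{k}\lceil\ell_i/2\rceil$, and
$$F\in\Delta_{K_t(C_n^c)}\iff \sum_{i=1}^{k}\lceil\ell_i/2\rceil\le t-1.$$
Since $\ell_i\le 2\lceil\ell_i/2\rceil$, every face has $|F|=\sum_i\ell_i\le 2t-2$, so $\dim\Delta_{K_t(C_n^c)}\le 2t-3$ (and $\{x_1,\dots,x_{2t-2}\}$ is a face, so the dimension will be exactly $2t-3$ once purity is known).

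The main point is purity: I must rule out facets with fewer than $2t-2$ vertices. Suppose $F$ is a face with $|F|\le 2t-3$; I want to add a vertex while keeping $\sum_i\lceil\ell_i/2\rceil\le t-1$, contradicting maximality. Any single vertex added to $F$ either creates a new length-$1$ arc, lengthens one arc by $1$, or (when it is the unique vertex in a gap) merges two arcs into one, so $\sum_i\lceil\ell_i/2\rceil$ grows by at most $1$. Hence if $\sum_i\lceil\ell_i/2\rceil\le t-2$ we may add any vertex outside $F$ and stay a face. If instead $\sum_i\lceil\ell_i/2\rceil=t-1$, then $|F|=\sum_i\ell_i\le 2t-3<2t-2=\sum_i 2\lceil\ell_i/2\rceil$ forces some arc $A_j$ to have odd length; adding the vertex adjacent to $A_j$ inside the gap on one side of it (which lengthens $A_j$ by $1$, or merges $A_j$ with a neighbouring arc) leaves the sum unchanged, using $\lceil(\ell_j+1)/2\rceil=\lceil\ell_j/2\rceil$ and $\lceil(\ell_j+\ell_{j'}+1)/2\rceil=\lceil\ell_j/2\rceil+\lceil\ell_{j'}/2\rceil$ for odd $\ell_j$ and arbitrary $\ell_{j'}$. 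Either way $F$ is not maximal, so every facet has exactly $2t-2$ vertices and $\Delta_{K_t(C_n^c)}$ is pure of dimension $2t-3$.

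The only delicate point I anticipate is the parity bookkeeping in the last paragraph: tracking precisely how $\sum_i\lceil\ell_i/2\rceil$ changes when a vertex is inserted into a gap of length $1$, $2$, or $\ge 3$, and checking the degenerate case $k=1$ causes no trouble (there $n-|F|\ge 3$ forces the single gap to have length $\ge 3$, so the added vertex merely lengthens $A_j$ and no merging occurs). Everything else reduces to the block description and the inequality $\ell_i\le 2\lceil\ell_i/2\rceil$.
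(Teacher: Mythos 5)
Your argument is correct, but it takes a genuinely different route from the paper. The paper's proof of this lemma is a short bootstrap from the path case: given a facet $F$ of $\Delta_{K_t(C_n^c)}$, it first notes $|F|\le 2t-1<n$, so some $x_i$ is missing; then $F$ is a face of $\Delta_{K_t(P^c)}$ for the induced path $P=C_n\setminus x_i$, hence contained in a facet $F'$ of that complex, which is again a face of $\Delta_{K_t(C_n^c)}$; maximality forces $F=F'$, and purity of dimension $2t-3$ is inherited from Theorem \ref{pathlinear}. You instead give a self-contained combinatorial proof via the decomposition of $F$ into maximal cyclic arcs and the identity that the independence number of $C_n[F]$ equals $\sum_i\lceil\ell_i/2\rceil$; purity then comes from a parity argument showing any face with at most $2t-3$ vertices can be extended. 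Both are sound. The paper's reduction is shorter and reuses work already done for $P_n$ (which is the pattern of the whole section, since the shellability proof for $C_n$ also decomposes $\Delta_{K_t(C_n^c)}$ as a union of the path complexes $\Delta_i$); your argument costs more case analysis (the gap-length and $k=1$ cases you flag are exactly the delicate spots, and you handle them correctly) but buys an explicit description of the facets — precisely the subsets of size $2t-2$ all of whose maximal arcs have even length — which the paper does not state.
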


\begin{proof}
Let $C_n:x_1,\ldots,x_n$ be a cycle and set $\Delta'_{n,t}=\Delta_{K_t(C_n^c)}$.
For a subset $F\subseteq \{x_1,\ldots,x_n\}$, one has
$$F\in \Delta'_{n,t} \Longleftrightarrow   F\ \textrm{contains no independent set of}\ C_n\ \emph{of size t}.$$
Note that $C_n$  has an independent set of size $t$ if and only if $n\geq 2t$. So if $n<2t$, then $\Delta'_{n,t}=\langle\{x_1,\ldots,x_n\}\rangle$ is a simplex and hence pure of dimension $n-1$. So we assume that $n\geq 2t$. Let $F\in \mathcal{F}(\Delta'_{n,t})$. Since any subset of $\{x_1,\ldots,x_n\}$ of size $2t$ contains an independent set of $C_n$ of size $t$, we have $|F|\leq 2t-1<n$. Thus there exists $x_i\notin F$. Let $P$ be the induced subgraph of $C_n$ on the vertex set $\{x_1,\ldots,x_{i-1},x_{i+1},\ldots,x_n\}$ which is a path with $n-1$ vertices. Then $F$ is a face of $\Delta_{K_t(P^c)}$. So there exists a facet $F'\in \mathcal{F}(\Delta_{K_t(P^c)})$ such that $F\subseteq F'$. Note that $F'\in \Delta'_{n,t}$. Now, since $F\in \mathcal{F}(\Delta'_{n,t})$, we should have $F=F'$. Thus $F\in \mathcal{F}(\Delta_{K_t(P^c)})$. Now by Theorem \ref{pathlinear}, $\Delta_{K_t(P^c)}$ is pure of dimension $2t-3$. Thus $|F|=2t-2$. Therefore any facet of $\Delta'_{n,t}$ has dimension $2t-3$.
\end{proof}

\begin{thm}\label{cyclelinear}
Let $n$ and $t$ be positive integers. Then $\Delta_{K_t(C_n^c)}$ is pure shellable and hence $R/K_t(C_n^c)$ is Cohen-Macaulay.
\end{thm}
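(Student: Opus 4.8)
The plan is to reduce the shellability of $\Delta'_{n,t}=\Delta_{K_t(C_n^c)}$ to the already-established shellability of $\Delta_{K_t(P_{n-1}^c)}$ (Theorem \ref{pathlinear}), just as Lemma \ref{dimcycle} reduced the purity and dimension question to the path case. Indeed, the computation in the proof of Lemma \ref{dimcycle} essentially shows that for $n\geq 2t$ every facet $F$ of $\Delta'_{n,t}$ misses some vertex $x_i$, and is then a facet of $\Delta_{K_t(P^c)}$ where $P=C_n\setminus x_i$ is a path on $n-1$ vertices. So the facets of $\Delta'_{n,t}$ are covered by the facet sets of the $n$ subcomplexes $\Delta_{K_t((C_n\setminus x_i)^c)}$, each of which is pure shellable of dimension $2t-3$. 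The first step is to choose one such path, say $P=C_n\setminus x_n$ on vertices $x_1,\ldots,x_{n-1}$, list a shelling $F_1<\cdots<F_r$ of $\Delta_{K_t(P^c)}$ (which is a subcomplex of $\Delta'_{n,t}$ of the full dimension $2t-3$), and then append the remaining facets of $\Delta'_{n,t}$ — exactly those containing $x_n$ — in some order after $F_r$.

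The second step is to understand the facets $F$ of $\Delta'_{n,t}$ with $x_n\in F$. Since $F$ misses some vertex $x_i$ with $i\neq n$, one can cyclically relabel so that $F$ lies in the path $C_n\setminus x_i$; equivalently, $F$ is a facet of $\Delta_{K_t(P_i^c)}$ for the path $P_i:x_{i+1},x_{i+2},\ldots,x_n,x_1,\ldots,x_{i-1}$. By the structural description in the proof of Theorem \ref{pathlinear}, every facet containing the last two vertices of that path contains a "block" of two consecutive vertices, and in general facets of the path complex are well understood. I would order the $x_n$-containing facets by grouping them according to which vertex $x_i$ is "the obstruction," processing $i=n-1,n-2,\ldots$ in turn, and within each group using the induced shelling of the corresponding path complex. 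The key point to verify is the shelling condition between an earlier facet and a later facet $F$ with $x_n\in F$: I must produce a vertex $v\in F\setminus F'$ and an earlier facet $F''$ with $F\setminus F''=\{v\}$. When $F'$ is one of the path facets $F_j$ not containing $x_n$, the vertex $v=x_n$ works, provided $F\setminus\{x_n\}$ (or $F$ with $x_n$ swapped for a neighbour) is still a face lying in $\Delta_{K_t(P^c)}$ and hence inside some earlier $F_l$ — this is the same swap-a-vertex argument used repeatedly in the proof of Theorem \ref{pathlinear}, where replacing $x_n$ by $x_{n-1}$ (or $x_1$) in an independent set preserves independence in the cycle.

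The main obstacle will be the shelling condition between two facets $F'$ and $F$ that both contain $x_n$ but come from different "obstruction paths" $P_i\neq P_j$. Here the naive choice $v=x_n$ is unavailable since $x_n\in F'$, and one must exhibit a different vertex of $F\setminus F'$ that can be removed to land inside an earlier facet. I expect this is handled by a careful global ordering: order the $x_n$-containing facets by a suitable lexicographic-type rule on the complement $\{x_1,\ldots,x_n\}\setminus F$ (equivalently on the minimal independent set of $C_n$ "just outside" $F$), chosen so that whenever $F$ comes after $F'$ there is a vertex $v\in F\setminus F'$ with $F\setminus\{v\}$ extendable to an earlier facet. Concretely, since both $F$ and $F'$ have size $2t-2=n-2$ only when $n=2t$, while for $n>2t$ the facets are small relative to $n$, the combinatorics is really about which $t$-element independent sets of $C_n$ are avoided; I would phrase the ordering in terms of these and verify the exchange property directly, falling back on the path result inside each cyclic rotation. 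Once the shelling is in place, Cohen-Macaulayness of $R/K_t(C_n^c)$ follows from the standard fact that a pure shellable complex is Cohen-Macaulay over any field.
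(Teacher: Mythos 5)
Your overall strategy is the same as the paper's: for $n\geq 2t$ write $\Delta'_{n,t}=\Delta_{K_t(C_n^c)}$ as the union of the $n$ subcomplexes $\Delta_i=\Delta_{K_t((C_n\setminus x_i)^c)}$, each pure shellable of dimension $2t-3$ by Theorem \ref{pathlinear}, note by purity that every facet of each $\Delta_i$ is a facet of $\Delta'_{n,t}$ and conversely, and then concatenate the individual shellings, discarding repetitions. Up to that point you are on the right track, and your handling of the case of two facets from the same group is fine.

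The gap is exactly where you flag it and then stop: the shelling condition between a facet $F'$ listed in the group of $\Delta_i$ and a later facet $F$ listed in the group of $\Delta_j$ with $i\neq j$. You write that you ``expect this is handled by a careful global ordering'' chosen so that a suitable exchange vertex exists, but you never exhibit such an ordering or verify the exchange property, and that verification is the entire content of the theorem beyond Lemma \ref{dimcycle}. In fact no lexicographic refinement is needed; the naive ``group by missing vertex, keep only the first occurrence'' order already works, for the following reason. If $F$ is listed in group $j$ and $i<j$ indexes an earlier group, then $x_i\in F$: otherwise $F$ would be a face of $\Delta_i$ of cardinality $2t-2=\dim(\Delta_i)+1$, hence a facet of $\Delta_i$, and would already have appeared in group $i$, contradicting the removal of duplicates. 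Since $F'$ is a facet of $\Delta_i$ it does not contain $x_i$, so $v=x_i\in F\setminus F'$. Moreover $F\setminus\{x_i\}$ is a face of $\Delta_i$, hence contained in some facet $F''$ of $\Delta_i$, which precedes $F$ in the list, and purity gives $F\setminus F''=\{x_i\}$. This single observation — that a facet skipped by all earlier groups must contain every earlier deleted vertex — is the missing idea; with it your proposed construction closes up into a complete proof, and the Cohen--Macaulay conclusion then follows as you say from pure shellability.
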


\begin{proof}
Let $C_n:x_1,\ldots,x_n$ be a cycle and $\Delta'_{n,t}=\Delta_{K_t(C_n^c)}$. If $n<2t$, then as was shown in the proof of Lemma \ref{dimcycle}, $\Delta'_{n,t}$ is a simplex. So it is pure shellable. We assume that $n\geq 2t$. Then again by Lemma \ref{dimcycle}, $\Delta'_{n,t}$ is  pure of dimension $2t-3$. We show that $\Delta'_{n,t}$ is shellable.

For any $1\leq i\leq n$, let $L_i$ be the induced subgraph of $C_n$ on the set $$\{x_1,\ldots,x_{i-1},x_{i+1},\ldots,x_n\}$$ which is a path graph and let $\Delta_i=\Delta_{K_t(L_i^c)}$ be a simplicial complex on the vertex set $\{x_1,\ldots,x_{i-1},x_{i+1},\ldots,x_n\}$. Then by Theorem \ref{pathlinear}, $\Delta_i$ is pure shellable of dimension $2t-3$ (note that $n-1\geq 2t-1$). Let $F\in \mathcal{F}(\Delta'_{n,t})$. Since $|F|=2t-2<n$, there exists $x_i\notin F$. Then $F$ is a facet of $\Delta_i$.  Also any facet of $\Delta_i$ is a facet of $\Delta'_{n,t}$, since $\Delta_i$ and $\Delta'_{n,t}$ are both pure of dimension $2t-3$. Thus $\Delta'_{n,t}=\Delta_1\cup \cdots \cup \Delta_n$. For any $1\leq i\leq n$, let $F_{i1}<\cdots<F_{is_i}$ be a shelling order on the facets of $\Delta_i$. Consider the ordering

\begin{equation}\label{shelling}
F_{11}<\cdots<F_{1s_1}<F_{21}<\cdots<F_{2s_2}<\cdots<F_{n1}<\cdots<F_{ns_n}.
\end{equation}

For any integers $i$ and $j$ with $i<j$, if $F_{ik}=F_{jr}$ for some $1\leq k\leq s_i$ and $1\leq r\leq s_j$, then we remove $F_{jr}$ from the above ordering. So we get an ordering $\mathcal{L}$ with no repeated terms on the facets of $\Delta'_{n,t}$.
We show that $\mathcal{L}$ is a shelling order for $\Delta'_{n,t}$.

Let $F_{ik},F_{jr}\in \mathcal{F}(\Delta'_{n,t})$ such that $F_{ik}<F_{jr}$ in $\mathcal{L}$. If $i=j$, then $F_{ik},F_{ir}\in \mathcal{F}(\Delta_i)$. So there exists $x_d\in F_{ir}\setminus F_{ik}$ and $\ell<r$ such that $F_{ir}\setminus F_{i\ell}=\{x_d\}$. Note that $F_{i\ell}<F_{ir}$ in $\mathcal{L}$.
Now, let $i\neq j$. Then $i<j$ and $x_i\notin F_{ik}$ and $x_j\notin F_{jr}$. We claim that
$x_i\in F_{jr}$. By contradiction assume that $x_i\notin F_{jr}$. Then $F_{jr}\in \Delta_i$. Since $|F_{jr}|=2t-2$ and $\Delta_i$ is pure of dimension $2t-3$, $F_{jr}$ is a facet of $\Delta_i$ too. So $F_{jr}=F_{ir'}$ for some $1\leq r'\leq s_i$. But it means that $F_{jr}$ has appeared twice in $\mathcal{L}$ which contradicts to the construction of $\mathcal{L}$. So $x_i\in F_{jr}$. Thus $F_{jr}\setminus \{x_i\}\in \Delta_i$ and so $F_{jr}\setminus \{x_i\}\subseteq F_{i\ell}$ for some $1\leq \ell\leq s_i$. Clearly $F_{i\ell}<F_{jr}$ in $\mathcal{L}$, $x_i\in F_{jr}\setminus F_{ik}$ and $F_{jr}\setminus F_{i\ell}=\{x_i\}$.
\end{proof}

\section{The $t$-independence ideal of a graph}

In this section, we consider the $t$-independence ideal of a graph and using its relation to the $t$-clique ideal and the results of the previous section, we obtain some homological invariants of the $t$-independence ideal of chordal graphs, path graphs and cycle graphs.

\begin{defn}
For a graph $G$, we define the $t$-independence ideal of $G$ as $$J_t(G)=\bigcap_{\{x_{i_1},\ldots,x_{i_t}\}\in \Delta_G}(x_{i_1},\ldots,x_{i_t}).$$ Indeed $J_t(G)=K_t(G^c)^{\vee}$.
\end{defn}

The following theorem was proved in \cite{MK}.

\begin{thm}\cite[Theorem 2.3]{MK}\label{vdMK}
Let $\Delta$ be a simplicial complex. Then $\Delta$ is vertex decomposable if and only if $I_{\Delta^{\vee}}$ is a vertex splittable ideal.
\end{thm}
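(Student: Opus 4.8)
The natural plan is an induction on $n=|X|$ coupling Alexander duality with the standard ``split at a vertex'' decomposition of a Stanley--Reisner ideal. The first task is a piece of bookkeeping. Fix a vertex $x$, put $Y=X\setminus\{x\}$, and split $\mathcal{F}(\Delta)$ into $A=\{F\in\mathcal{F}(\Delta):x\in F\}$ and $B=\{F\in\mathcal{F}(\Delta):x\notin F\}$. One verifies that always $\mathcal{F}(\T{lk}_{\Delta}(x))=\{F\setminus\{x\}:F\in A\}$, always $\T{lk}_{\Delta}(x)\subseteq\T{del}_{\Delta}(x)$, and $B\subseteq\mathcal{F}(\T{del}_{\Delta}(x))$ with equality exactly when $x$ is a shedding vertex. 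Feeding this into $\mathcal{G}(I_{\Delta^{\vee}})=\{x^{X\setminus F}:F\in\mathcal{F}(\Delta)\}$ (the minimal generating set, since facets form an antichain) and using $x^{X\setminus F}=x\cdot x^{Y\setminus F}$ for $F\in B$ while $x^{X\setminus F}=x^{Y\setminus(F\setminus\{x\})}$ for $F\in A$, one gets, when $x$ is a shedding vertex,
$$I_{\Delta^{\vee}}\;=\;x\,I_{(\T{del}_{\Delta}(x))^{\vee}}+I_{(\T{lk}_{\Delta}(x))^{\vee}};$$
moreover $\mathcal{G}(I_{\Delta^{\vee}})$ is the disjoint union of $\mathcal{G}\bigl(x\,I_{(\T{del}_{\Delta}(x))^{\vee}}\bigr)$ and $\mathcal{G}\bigl(I_{(\T{lk}_{\Delta}(x))^{\vee}}\bigr)$, and $I_{(\T{lk}_{\Delta}(x))^{\vee}}\subseteq I_{(\T{del}_{\Delta}(x))^{\vee}}$ since $\T{lk}_{\Delta}(x)\subseteq\T{del}_{\Delta}(x)$ and both $(-)^{\vee}$ on complexes and $I_{(-)}$ reverse inclusions. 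Throughout, duals and Stanley--Reisner ideals of subcomplexes are taken over the common ground set $Y$.

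For the forward implication, suppose $\Delta$ is vertex decomposable. If $\Delta$ is a simplex then $\Delta^{\vee}$ is void and $I_{\Delta^{\vee}}=R$ is vertex splittable. Otherwise pick a shedding vertex $x$ with $\T{del}_{\Delta}(x)$ and $\T{lk}_{\Delta}(x)$ vertex decomposable; these are complexes on the smaller set $Y$, so by the inductive hypothesis $I_{(\T{del}_{\Delta}(x))^{\vee}}$ and $I_{(\T{lk}_{\Delta}(x))^{\vee}}$ are vertex splittable ideals of $k[Y]$, and the displayed identity with its two accompanying facts exhibits $I_{\Delta^{\vee}}$ as a vertex splitting. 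Hence $I_{\Delta^{\vee}}$ is vertex splittable.

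For the converse, suppose $I_{\Delta^{\vee}}$ is vertex splittable. If it is of the trivial type ($(0)$, $R$, or principal) a direct inspection shows $\Delta$ is a simplex, so assume $I_{\Delta^{\vee}}=xJ+L$ is a vertex splitting with splitting vertex $x$. Since $J,L\subseteq k[Y]$, the disjoint-union hypothesis forces $\mathcal{G}(xJ)=\{m\in\mathcal{G}(I_{\Delta^{\vee}}):x\mid m\}$ and $\mathcal{G}(L)=\{m\in\mathcal{G}(I_{\Delta^{\vee}}):x\nmid m\}$, so $J$ and $L$ are determined by $I_{\Delta^{\vee}}$ and $x$; comparing with $\mathcal{G}(I_{\Delta^{\vee}})=\{x^{X\setminus F}:F\in\mathcal{F}(\Delta)\}$ gives $\mathcal{F}(\Delta_J)=B$ and $\mathcal{F}(\Delta_L)=\{F\setminus\{x\}:F\in A\}=\mathcal{F}(\T{lk}_{\Delta}(x))$, where $\Delta_J,\Delta_L$ are the complexes on $Y$ with $I_{\Delta_J^{\vee}}=J$, $I_{\Delta_L^{\vee}}=L$. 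By induction $\Delta_J,\Delta_L$ are vertex decomposable, and $\Delta_L=\T{lk}_{\Delta}(x)$ already. The crux is to show $x$ is a shedding vertex of $\Delta$, i.e.\ $\mathcal{F}(\T{del}_{\Delta}(x))=B$: if a facet $G$ of $\T{del}_{\Delta}(x)$ were not a facet of $\Delta$, then $G\cup\{x\}\in\Delta$, and tracking the monomial $x^{X\setminus G}\in I_{\Delta^{\vee}}=xJ+L$ (divisible by some minimal generator; divide out $x$, and use $L\subseteq J$) yields $x^{Y\setminus G}\in J$, i.e.\ $G\in\Delta_J$; then $G$ lies in a facet of $\Delta$ avoiding $x$, and maximality of $G$ among $x$-free faces of $\Delta$ forces $G$ to be that facet, a contradiction. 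Once $x$ is shedding, $\T{del}_{\Delta}(x)=\Delta_J$, so $\Delta$ has a shedding vertex whose link and deletion are vertex decomposable, hence $\Delta$ is vertex decomposable.

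I expect the genuine obstacle to be exactly this last step — extracting the combinatorial shedding condition from the algebraic ``disjoint union of minimal generating sets'' requirement. A secondary, more tedious point is the treatment of the degenerate boundary cases (the void complex, $\{\emptyset\}$, and vertices contained in no facet), where the conventions for $I_{\Delta^{\vee}}$ and for ``working over the ground set $Y$'' must be fixed so that the induction really closes.
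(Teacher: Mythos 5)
The paper gives no proof of this statement: it is quoted verbatim from \cite[Theorem 2.3]{MK}, and that reference's argument is exactly the one you reconstruct, namely induction on the ground set via the identity $I_{\Delta^{\vee}}=x\,I_{(\T{del}_{\Delta}(x))^{\vee}}+I_{(\T{lk}_{\Delta}(x))^{\vee}}$ at a shedding (equivalently, splitting) vertex, with the divisibility-by-$x$ bookkeeping matching facets off/on $x$ to $\mathcal{G}(xJ)$ and $\mathcal{G}(L)$. Your proof is correct, including the key converse step that any face of $\Delta$ avoiding $x$ yields $x^{Y\setminus G}\in J$ and hence lies in a facet of $\Delta$ avoiding $x$; the only loose ends are the degenerate conventions (void complex, $J=R$ or $L=(0)$, ghost vertices of the ground set $Y$), which you correctly flag and which are routine to fix.
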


Using Theorems \ref{vs} and \ref{vdMK}, we have the following Theorem.

\begin{thm}\label{pdc}
Let $G$ be a chordal graph. Then
\begin{itemize}
  \item [(i)] $\Delta_{J_t(G)}$ is pure vertex decomposable.
  \item [(ii)] $R/J_t(G)$ is  Cohen-Macaulay.
  \item [(iii)]  If $J_t(G)\neq 0$, then $\T{pd}(R/J_t(G))=t$.
  \item [(iv)] If $u$ is a simplicial vertex of $G$, then $J_t(G\setminus u)\neq 0$, then $\T{reg}(R/J_t(G))=\max\{\T{reg}(R/J_t(G\setminus u))+1,\T{reg}(R/J_{t-1}(G\setminus N_G[u]))\}.$
\end{itemize}
\end{thm}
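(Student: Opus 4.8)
The plan is to obtain all four statements from the single identity $J_t(G)=K_t(G^c)^{\vee}$, fed into the structural facts already established for $K_t(G^c)$: it is vertex splittable (Theorem \ref{vs}), it has a $t$-linear resolution whenever it is nonzero (Corollary \ref{cor1}), and its projective dimension satisfies the recursion of Corollary \ref{cor2}(ii). I will use repeatedly that $(I^{\vee})^{\vee}=I$, that $(I_{\Delta})^{\vee}=I_{\Delta^{\vee}}$, and Theorem \ref{1.3} in its two equivalent forms $\T{pd}(I^{\vee})=\T{reg}(R/I)$ and $\T{pd}(R/I)=\T{reg}(I^{\vee})$. Throughout I assume $G$ has an independent set of size $t$, equivalently $K_t(G^c)\neq 0$, so that $J_t(G)\neq 0$; in the contrary case the four assertions hold trivially by the relevant conventions.

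Write $n=|V(G)|$ and put $\Delta=\Delta_{J_t(G)}$. From $J_t(G)=K_t(G^c)^{\vee}=(I_{\Delta_{K_t(G^c)}})^{\vee}=I_{(\Delta_{K_t(G^c)})^{\vee}}$ we get $\Delta=(\Delta_{K_t(G^c)})^{\vee}$, hence $I_{\Delta^{\vee}}=(I_{\Delta})^{\vee}=J_t(G)^{\vee}=K_t(G^c)$, which is vertex splittable by Theorem \ref{vs}; so $\Delta_{J_t(G)}$ is vertex decomposable by Theorem \ref{vdMK}. For purity, recall that the facets of an Alexander dual complex $\Lambda^{\vee}$ are exactly the complements $V\setminus F$ of the minimal non-faces $F$ of $\Lambda$, and that the minimal non-faces of $\Delta_{K_t(G^c)}$ correspond to the minimal generators of $K_t(G^c)=I_{\Delta_{K_t(G^c)}}$, namely the $t$-element independent sets of $G$; as these all have cardinality $t$, every facet of $\Delta_{J_t(G)}$ has cardinality $n-t$. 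Thus $\Delta_{J_t(G)}$ is pure of dimension $n-t-1$, which proves (i) and gives $\dim(R/J_t(G))=n-t$. For (iii), apply Theorem \ref{1.3} with $I=J_t(G)$: since $J_t(G)^{\vee}=K_t(G^c)$, we get $\T{pd}(R/J_t(G))=\T{reg}(K_t(G^c))=t$, the last equality because $K_t(G^c)$ has a $t$-linear resolution by Corollary \ref{cor1}. Now (ii) follows from the Auslander-Buchsbaum formula: $\depth(R/J_t(G))=n-\T{pd}(R/J_t(G))=n-t=\dim(R/J_t(G))$, so $R/J_t(G)$ is Cohen-Macaulay (alternatively, by (i) it is pure and vertex decomposable, hence shellable, hence Cohen-Macaulay).

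For (iv), apply Theorem \ref{1.3} in the form $\T{reg}(R/I)=\T{pd}(I^{\vee})$ to $I=J_t(G)$, $I=J_t(G\setminus u)$ and $I=J_{t-1}(G\setminus N_G[u])$, using that the Alexander dual of $J_s(H)$ is $K_s(H^c)$; the asserted identity then becomes
$$\T{pd}(K_t(G^c))=\max\{\T{pd}(K_t((G\setminus u)^c))+1,\ \T{pd}(K_{t-1}((G\setminus N_G[u])^c))\},$$
which is precisely Corollary \ref{cor2}(ii). Its hypotheses hold: $G$ is chordal and $u$ is simplicial by assumption, $K_t((G\setminus u)^c)\neq 0$ since $J_t(G\setminus u)\neq 0$, and this forces $K_t(G^c)\neq 0$ because an independent set of $G\setminus u$ of size $t$ is also one of $G$. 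The only step that is not a mechanical substitution into an earlier result is the purity claim in (i), and even that reduces to the elementary fact that all minimal generators of $K_t(G^c)$ share the degree $t$; the points that require attention are keeping the two forms of Theorem \ref{1.3} and the double dual $J_t(G)^{\vee}=K_t(G^c)$ straight, and checking that the degenerate cases (empty intersections, $K_{t-1}(\cdot)=0$) are compatible with the conventions used in Corollary \ref{cor2}.
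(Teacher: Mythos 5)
Your proposal is correct and follows essentially the same route as the paper: both rest on the identification $J_t(G)^{\vee}=K_t(G^c)$ together with Theorems \ref{vs}, \ref{vdMK}, \ref{1.3} and Corollaries \ref{cor1}, \ref{cor2}. The only (cosmetic) difference is that you derive (iii) first from Terai's formula $\T{pd}(R/J_t(G))=\T{reg}(K_t(G^c))=t$ and then get (ii) via Auslander--Buchsbaum, whereas the paper obtains (ii) by quoting the Eagon--Reiner theorem and then deduces (iii) from Cohen--Macaulayness; these are equivalent, since Eagon--Reiner is itself a consequence of Terai's theorem.
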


\begin{proof}

$(i)$ By Theorems \ref{vs} and \ref{vdMK} $\Delta_{J_t(G)}$ is vertex decomposable. Note that $F\in \mathcal{F}(\Delta_{J_t(G)})$ if and only if $x^{F^c}$ is a minimal generator of $K_t(G^c)$. Since $K_t(G^c)$ is homogenous of degree $t$, thus $\Delta_{J_t(G)}$ is pure.

$(ii)$ By Corollary \ref{cor1}  $K_t(G^c)$ has a linear resolution. So by Eagon-Reiner Theorem $R/J_t(G)=R/K_t(G^c)^{\vee}$ is Cohen-Macaulay.

$(iii)$ Since $R/J_t(G)$ is  Cohen-Macaulay, by Auslander-Buchsbaum formula $$\T{pd}(R/J_t(G))=\dim(R)-\dim(R/J_t(G))=\T{ht}(J_t(G))=t.$$
$(iv)$ follows from Corollary \ref{cor2}(ii) and Theorem \ref{1.3}.
\end{proof}

We use the following theorem to prove Corollary \ref{pathlinear2}.

\begin{thm} (See  \cite[Theorem 1.4]{HD}.)\label{hdd}
A simplicial complex $\Delta$ is shellable if and only if $I_{\Delta^{\vee}}$ has linear quotients. Indeed $F_1<\cdots<F_m$ is  a shelling for $\Delta$ if and only if $x^{F_1^c}<\cdots<x^{F_m^c}$ is an order of linear quotients on the minimal generators of $I_{\Delta^{\vee}}$.
\end{thm}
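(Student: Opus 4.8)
The plan is to prove the equivalence by a direct translation between the combinatorial shelling condition for $\Delta$ and the algebraic linear-quotients condition for $I_{\Delta^{\vee}}$, using the explicit description $I_{\Delta^{\vee}}=(x^{F^c}:\ F\in\mathcal{F}(\Delta))$ recalled in the preliminaries. First I would observe that these monomials are in fact the \emph{minimal} generators of $I_{\Delta^{\vee}}$: since $x^{G^c}$ divides $x^{F^c}$ iff $G^c\subseteq F^c$ iff $F\subseteq G$, and distinct facets of $\Delta$ are never comparable, no $x^{F^c}$ is a multiple of another, so $\mathcal{G}(I_{\Delta^{\vee}})=\{x^{F^c}:\ F\in\mathcal{F}(\Delta)\}$. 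Hence $F\mapsto x^{F^c}$ gives a bijection between orderings $F_1<\cdots<F_m$ of $\mathcal{F}(\Delta)$ and orderings $g_1<\cdots<g_m$ of $\mathcal{G}(I_{\Delta^{\vee}})$, and it suffices to show that a fixed such pair of orderings satisfies the shelling condition if and only if it satisfies the linear-quotients condition; the degenerate case where $\Delta$ is a simplex (so $I_{\Delta^\vee}$ is principal or zero) is immediate.

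The technical heart is a colon computation. Writing $g_k=x^{F_k^c}$, for $k<i$ we have $\gcd(g_k,g_i)=x^{F_k^c\cap F_i^c}=x^{(F_k\cup F_i)^c}$, and since $a\in F_k^c\setminus(F_k\cup F_i)^c$ means exactly $a\notin F_k$ and $a\in F_i$,
\[
(g_k):(g_i)=\big(\,x^{F_k^c}/\gcd(g_k,g_i)\,\big)=\big(x^{\,F_i\setminus F_k}\big).
\]
Because the colon distributes over sums in the numerator, this gives
\[
(g_1,\ldots,g_{i-1}):(g_i)=\big(x^{\,F_i\setminus F_k}\ :\ 1\le k\le i-1\big),
\]
and note that each exponent set $F_i\setminus F_k$ is nonempty, since $F_i\not\subseteq F_k$ for distinct facets.

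Next I would use the elementary fact that a monomial ideal $J=(u_1,\ldots,u_r)$ with all $\deg u_k\ge 1$ is generated by a subset of the variables if and only if every $u_k$ is divisible by some variable lying in $J$; moreover a variable $x_v\in J$ must be a multiple of some $u_k$, hence must equal some $u_k$. Applied to $J=(g_1,\ldots,g_{i-1}):(g_i)$ with $u_k=x^{F_i\setminus F_k}$, and using that each $F_i\setminus F_k\ne\emptyset$, a variable $x_v$ lies in $J$ precisely when $F_i\setminus F_\ell=\{v\}$ for some $\ell<i$. Therefore $(g_1,\ldots,g_{i-1}):(g_i)$ is generated by variables if and only if for every $k<i$ there exist $\ell<i$ and a vertex $v$ with $v\in F_i\setminus F_k$ and $F_i\setminus F_\ell=\{v\}$ — which, after the relabelling $i\rightsquigarrow j$, $k\rightsquigarrow i$, is verbatim the shelling requirement for the pair $F_i,F_j$. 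Taking the conjunction over all $i\ge 2$ shows that $g_1<\cdots<g_m$ is an order of linear quotients for $I_{\Delta^{\vee}}$ exactly when $F_1<\cdots<F_m$ is a shelling of $\Delta$; since such an order exists on one side iff it exists on the other, the biconditional of the theorem follows, along with the asserted explicit correspondence. I do not anticipate a genuine obstacle: the only points needing care are the bookkeeping that the $x^{F^c}$ are the minimal generators and getting the $\gcd$/colon identity exactly right, after which the two conditions are literally the same statement.
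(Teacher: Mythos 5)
The paper does not prove this statement; it is quoted from Herzog--Hibi--Zheng \cite[Theorem 1.4]{HD}, so there is no internal proof to compare against. Your direct verification is correct and is essentially the standard argument from that reference: the identity $(x^{F_k^c}):(x^{F_i^c})=(x^{F_i\setminus F_k})$, the distribution of the colon over the sum for a principal (monomial) divisor, and the observation that a monomial ideal with generators of positive degree is generated by variables iff each generator is divisible by a degree-one element of the ideal, together turn the linear-quotients condition into a verbatim restatement of the (non-pure) shelling condition.
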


Note that $K_t(P_n^c)\neq 0$ if and only if $n\geq 2t-1$.
Now, we get the following corollaries.

\begin{cor}\label{pathlinear2}
Let $n$ and $t$ be positive integers such that $n\geq 2t-1$. Then $J_t(P_n)$ has linear quotients and hence a $(n-2t+2)$-linear resolution.
\end{cor}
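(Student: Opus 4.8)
The plan is to derive this statement from Theorem \ref{pathlinear} together with Theorem \ref{hdd}, passing through the Alexander dual. By definition $J_t(P_n) = K_t(P_n^c)^{\vee}$, and since $n \geq 2t-1$ we know $K_t(P_n^c) \neq 0$, so $\Delta_{K_t(P_n^c)}$ is a genuine (nonvoid) simplicial complex. The key observation is that $J_t(P_n)$ is precisely the Stanley--Reisner ideal of the Alexander dual of $\Delta_{K_t(P_n^c)}$: indeed, if we write $\Delta := \Delta_{K_t(P_n^c)}$, then $I_\Delta = K_t(P_n^c)$ by definition of $\Delta_I$, and $(I_\Delta)^{\vee} = I_{\Delta^{\vee}}$ by the fact recalled in Section 1, so $J_t(P_n) = K_t(P_n^c)^{\vee} = I_{\Delta^{\vee}}$.

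First I would invoke Theorem \ref{pathlinear}, which tells us that $\Delta = \Delta_{K_t(P_n^c)}$ is (pure) shellable. Then I would apply Theorem \ref{hdd} with this $\Delta$: since $\Delta$ is shellable, $I_{\Delta^{\vee}}$ has linear quotients. But $I_{\Delta^{\vee}} = J_t(P_n)$ as noted above, so $J_t(P_n)$ has linear quotients. It remains to identify the degree of the generators. The minimal generators of $I_{\Delta^{\vee}} = (x^{F^c} : F \in \mathcal{F}(\Delta))$ have degree $|F^c| = n - |F|$, and by Theorem \ref{pathlinear} the complex $\Delta$ is pure of dimension $2t-3$ when $n \geq 2t-1$, i.e. every facet $F$ has $|F| = 2t-2$. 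Hence every minimal generator of $J_t(P_n)$ has degree $n - (2t-2) = n - 2t + 2$. Since $J_t(P_n)$ has linear quotients and all its minimal generators have the same degree $n-2t+2$, Lemma \ref{Faridi} gives that $J_t(P_n)$ has an $(n-2t+2)$-linear resolution.

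I expect no serious obstacle here: the entire argument is a bookkeeping exercise combining the already-established shellability of $\Delta_{K_t(P_n^c)}$ with the Eagon--Reiner-type dictionary of Theorem \ref{hdd} and the degree count forced by purity. The one point requiring a little care is confirming that $J_t(P_n)$ really equals $I_{\Delta^{\vee}}$ rather than something off by a complement or a shift, and that the case $n \geq 2t-1$ (as opposed to $n < 2t-1$, where $J_t(P_n) = 0$ since $K_t(P_n^c)=0$) is exactly the regime in which $\Delta$ is pure of the stated dimension; both are handled by the hypotheses and the explicit dimension formula in Theorem \ref{pathlinear}.
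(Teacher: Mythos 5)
Your proposal is correct and follows exactly the paper's own argument: invoke Theorem \ref{pathlinear} for pure shellability of $\Delta_{K_t(P_n^c)}$, pass to the Alexander dual via Theorem \ref{hdd} to get linear quotients for $J_t(P_n)=I_{\Delta^{\vee}}$, and use purity in dimension $2t-3$ to see every generator has degree $n-2t+2$, so Lemma \ref{Faridi} applies. No gaps; this matches the paper's proof step for step.
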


\begin{proof}
By Theorems \ref{pathlinear} and \ref{hdd}, $J_t(P_n)$ has linear quotients. Note that $x^F\in \mathcal{G}(J_t(P_n))$ if and only if $F^c=V(P_n)\setminus F\in \mathcal{F}(\Delta_{K_t(P_n^c)})$. By Theorem \ref{pathlinear} $\Delta_{K_t(P_n^c)}$ is pure of dimension $2t-3$, so any facet of it has cardinality $2t-2$ and then any minimal generator of
$J_t(P_n)$ has degree $n-(2t-2)$.  So by Lemma \ref{Faridi}, $J_t(P_n)$ has a linear resolution.
\end{proof}

Using above corollary  we can explain the projective dimension of the $t$-clique ideal (and hence the edge ideal) of the complement of a path graph.

\begin{cor}
Let $n$ and $t$ be positive integers such that $n\geq 2t-1$. Then $\T{pd}(K_t(P_n^c))=n-2t+1$. In particular, for $n\geq 3$, $\T{pd}(I(P_n^c))=n-3$.
\end{cor}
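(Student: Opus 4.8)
The plan is to read off the projective dimension from the linear resolution of the Alexander dual ideal $J_t(P_n)$. Recall that $J_t(P_n)=K_t(P_n^c)^{\vee}$ by definition, and since $(I^{\vee})^{\vee}=I$ we also have $K_t(P_n^c)=J_t(P_n)^{\vee}$. Applying Theorem \ref{1.3} with $I=J_t(P_n)$ therefore gives
$$\T{pd}(K_t(P_n^c))=\T{pd}\bigl(J_t(P_n)^{\vee}\bigr)=\T{reg}(R/J_t(P_n)),$$
so the whole problem reduces to computing $\T{reg}(R/J_t(P_n))$.

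For this I would invoke Corollary \ref{pathlinear2}: since $n\geq 2t-1$, the ideal $J_t(P_n)$ is nonzero (indeed $n\geq 2t-1$ is exactly the condition $K_t(P_n^c)\neq 0$, and the Alexander dual of a nonzero squarefree monomial ideal is again nonzero) and has an $(n-2t+2)$-linear resolution. Being generated in the single degree $n-2t+2$ with a linear minimal free resolution, $J_t(P_n)$ satisfies $\beta_{i,j}(J_t(P_n))\neq 0\Rightarrow j-i=n-2t+2$, whence $\T{reg}(J_t(P_n))=n-2t+2$ and
$$\T{reg}(R/J_t(P_n))=\T{reg}(J_t(P_n))-1=n-2t+1.$$
Combining with the displayed identity above yields $\T{pd}(K_t(P_n^c))=n-2t+1$.

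The particular case is then immediate: for $t=2$ we have $K_2(P_n^c)=I(P_n^c)$ and the hypothesis $n\geq 2t-1$ becomes $n\geq 3$, so the formula specializes to $\T{pd}(I(P_n^c))=n-2\cdot 2+1=n-3$. I do not anticipate a genuine obstacle here: every ingredient — the linear resolution of $J_t(P_n)$, the fact that its minimal generators all sit in degree $n-2t+2$ (from the dimension count $\dim(\Delta_{K_t(P_n^c)})=2t-3$ in Theorem \ref{pathlinear}), and the duality $\T{pd}(I^{\vee})=\T{reg}(R/I)$ — is already available, so the argument is a short chain of bookkeeping. The only point deserving a word of care is checking that $J_t(P_n)$ is a nonzero proper ideal before speaking of its initial degree and linear resolution, which is precisely what the hypothesis $n\geq 2t-1$ secures.
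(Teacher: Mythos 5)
Your argument is correct and coincides with the paper's own proof: both apply Theorem \ref{1.3} to $I=J_t(P_n)=K_t(P_n^c)^{\vee}$ to get $\T{pd}(K_t(P_n^c))=\T{reg}(R/J_t(P_n))$, and then read off $\T{reg}(J_t(P_n))=n-2t+2$ from the $(n-2t+2)$-linear resolution established in Corollary \ref{pathlinear2}. Nothing further is needed.
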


\begin{proof}
By Theorem \ref{1.3}, $\T{pd}(K_t(P_n^c))=\T{reg}(R/K_t(P_n^c)^{\vee})=\T{reg}(R/J_t(P_n))$. Now, by Corollary \ref{pathlinear2}, $\T{reg}(R/J_t(P_n))=\T{reg}(J_t(P_n))-1=n-2t+1$.
\end{proof}

The following theorem gives a recursive formula for the graded Betti numbers of the ideal $J_t(P_n)$.
\begin{thm}\label{cor3}
Let $n$ and $t$ be positive integers such that $n\geq 2t-1$. Then $$\beta_{i,j}(J_t(P_n))=\beta_{i,j-1}(J_t(P_{n-1}))+\beta_{i,j}(J_{t-1}(P_{n-2}))+\beta_{i-1,j-1}(J_{t-1}(P_{n-2})).$$
\end{thm}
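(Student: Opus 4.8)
The plan is to recognize that Theorem~\ref{cor3} is the Betti-number shadow of the structural decomposition underlying Theorem~\ref{pathlinear}, transported across the Alexander dual. Recall from the proof of Theorem~\ref{pathlinear} that, writing $P_n:x_1,\dots,x_n$ and $\Delta_{n,t}=\Delta_{K_t(P_n^c)}$, one has (for $n\geq 2t-1$, $t\geq 2$) the facet decomposition
\[
\Delta_{n,t}=\langle F_1,\dots,F_r,\ G_1\cup\{x_{n-1},x_n\},\dots,G_s\cup\{x_{n-1},x_n\}\rangle,
\]
where $\Delta_{n-1,t}=\langle F_1,\dots,F_r\rangle$ (built on the vertex set $\{x_1,\dots,x_{n-1}\}$) and $\Delta_{n-2,t-1}=\langle G_1,\dots,G_s\rangle$ (built on $\{x_1,\dots,x_{n-2}\}$). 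Dualizing via $x^{F}\in\mathcal G(J_t(P_n))\iff F^c\in\mathcal F(\Delta_{n,t})$, the first block of facets contributes the generators $x_n\cdot x^{(V(P_{n-1})\setminus F_i)}$, i.e.\ $x_n\cdot\mathcal G(J_t(P_{n-1}))$, while the second block, whose facet complements miss both $x_{n-1}$ and $x_n$, contributes exactly $\mathcal G(J_{t-1}(P_{n-2}))$ (here $J_{t-1}(P_{n-2})$ is viewed inside $k[x_1,\dots,x_{n-2}]$). So I would first establish the ideal identity
\[
J_t(P_n)=x_n\,J_t(P_{n-1})+J_{t-1}(P_{n-2}),
\]
with $\mathcal G(J_t(P_n))$ the disjoint union of $\mathcal G(x_n J_t(P_{n-1}))$ and $\mathcal G(J_{t-1}(P_{n-2}))$, taking care to note the boundary cases ($n=2t-1$, where $J_t(P_{n-1})=0$, and $t=1$, handled separately or vacuously).

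Next I would invoke that this is a \emph{Betti splitting}. The cleanest route is to use Theorem~\ref{hdd}: since $\Delta_{n,t}$ is shellable by Theorem~\ref{pathlinear}, $J_t(P_n)=(I_{\Delta_{n,t}^\vee})$ has linear quotients with the order of linear quotients dual to the shelling $F_1<\cdots<F_r<G_1\cup\{x_{n-1},x_n\}<\cdots<G_s\cup\{x_{n-1},x_n\}$. Alternatively, and more directly, the decomposition $J_t(P_n)=x_nJ_t(P_{n-1})+J_{t-1}(P_{n-2})$ with $J_{t-1}(P_{n-2})\subseteq J_t(P_{n-1})$ (every independent set of size $t-1$ in $P_{n-2}$ stays independent of size $t-1$ in $P_{n-1}$, so the intersection of the larger family is contained in that of... — one checks $J_{t-1}(P_{n-2})\subseteq J_t(P_{n-1})$ directly on generators, since a generator $x^F$ of $J_{t-1}(P_{n-2})$ has $F^c$ missing $x_{n-1},x_n$ and containing no size-$t$ independent set of $P_n$, hence $F^c\cup$ nothing is a face of $\Delta_{n-1,t}$) makes this a \emph{vertex splitting} in the sense of the paper with splitting vertex $x_n$, so Theorem~\ref{MKtheorem} applies verbatim with $x=x_n$, $I_1=J_t(P_{n-1})$, $I_2=J_{t-1}(P_{n-2})$.

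Then the formula in Theorem~\ref{MKtheorem},
\[
\beta_{i,j}(I)=\beta_{i,j-1}(I_1)+\beta_{i,j}(I_2)+\beta_{i-1,j-1}(I_2),
\]
with $I=J_t(P_n)$, $I_1=J_t(P_{n-1})$, $I_2=J_{t-1}(P_{n-2})$, is exactly the claimed recursion
\[
\beta_{i,j}(J_t(P_n))=\beta_{i,j-1}(J_t(P_{n-1}))+\beta_{i,j}(J_{t-1}(P_{n-2}))+\beta_{i-1,j-1}(J_{t-1}(P_{n-2})).
\]
The main obstacle is not the homological algebra — once the splitting is set up, Theorem~\ref{MKtheorem} finishes it — but rather the careful bookkeeping to show that $J_t(P_n)=x_nJ_t(P_{n-1})+J_{t-1}(P_{n-2})$ is genuinely a \emph{vertex} splitting: I must verify (a) the disjointness of the two generator sets, which follows because every generator in the first block is divisible by $x_n$ and none in the second block is; (b) the containment $J_{t-1}(P_{n-2})\subseteq J_t(P_{n-1})$; and (c) that no generator of $J_t(P_n)$ is lost, i.e.\ that every facet of $\Delta_{n,t}$ containing $x_n$ also contains $x_{n-1}$ — which is precisely the claim already proved inside the proof of Theorem~\ref{pathlinear}. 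I would therefore organize the proof so as to cite that argument rather than repeat it, and then simply apply Theorem~\ref{MKtheorem}.
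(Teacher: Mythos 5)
Your route is genuinely different from the paper's. The paper does not form an ideal splitting at all: it takes the shelling $F_1<\cdots<F_r<G_1\cup\{x_{n-1},x_n\}<\cdots<G_s\cup\{x_{n-1},x_n\}$ from Theorem~\ref{pathlinear}, transports it via Theorem~\ref{hdd} to an order of linear quotients on $J_t(P_n)$, and then computes $\beta_{i,j}$ directly from the Sharifan--Varbaro formula (Theorem~\ref{Leila}) by tracking how $\set_{J_t(P_n)}$ restricts to the two blocks of generators (the set is unchanged on the first block and gains exactly $x_n$ on the second). You instead package the same facet decomposition as the ideal identity $J_t(P_n)=x_nJ_t(P_{n-1})+J_{t-1}(P_{n-2})$ and invoke the vertex-splitting Betti formula of Theorem~\ref{MKtheorem}. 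Your identification of the generator sets and the verification of $J_{t-1}(P_{n-2})\subseteq J_t(P_{n-1})$ (which reduces to $G_j\cup\{x_{n-1}\}$ being a face of $\Delta_{n-1,t}$, proved at the end of Theorem~\ref{pathlinear}) are both correct, and your approach has the advantage of being more structural and of immediately yielding the Betti-splitting statement as well.

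There is, however, one gap you need to close before Theorem~\ref{MKtheorem} ``applies verbatim.'' A \emph{vertex splitting} $I=xI_1+I_2$ requires, by definition, that $I_1$ and $I_2$ themselves be vertex splittable ideals; by Theorem~\ref{vdMK} this is equivalent to $\Delta_{n-1,t}$ and $\Delta_{n-2,t-1}$ being vertex \emph{decomposable}, whereas the paper only proves these complexes are shellable (Theorem~\ref{pathlinear}), and shellability does not imply vertex decomposability. You verify conditions (a)--(c) but never address this hypothesis. The fix is routine but must be stated: run the whole argument as an induction on $n$, with base cases where $J_t(P_n)$ is $0$, the unit ideal, or principal, and use the decomposition itself in the inductive step to conclude that $J_t(P_n)$ is vertex splittable; this simultaneously legitimizes the application of Theorem~\ref{MKtheorem} at every stage and, as a byproduct, strengthens Theorem~\ref{pathlinear} from shellable to vertex decomposable. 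With that induction added (and a remark on the degenerate case $n=2t-1$, where $J_t(P_{n-1})$ is not a proper nonzero ideal), your proof is complete.
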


\begin{proof}
Let $P_n:x_1,\ldots,x_n$ be a path. As was shown in the proof of Theorem \ref{pathlinear}, $\Delta_{n,t}=\Delta_{K_t(P_n^c)}$ is shellable and if $F_1<\cdots<F_r$ and $G_1<\cdots<G_s$ are shelling orders for  $\Delta_{n-1,t}$ and $\Delta_{n-2,t-1}$, respectively, then  $$F_1<\cdots<F_r< G_1\cup\{x_{n-1},x_n\},\cdots<G_s\cup\{x_{n-1},x_n\}$$ is a shelling order for $\Delta_{n,t}$. Set $X=\{x_1,\ldots,x_n\}$. By Theorem \ref{hdd},
$$x^{(X\setminus\{x_n\})\setminus F_1}<\cdots<x^{(X\setminus\{x_n\})\setminus F_r}$$ is an order of linear quotients for $J_t(P_{n-1})$,
$$x^{(X\setminus \{x_{n-1},x_n\})\setminus (G_1\cup\{x_{n-1},x_n\})}<\cdots<x^{(X\setminus \{x_{n-1},x_n\})\setminus (G_s\cup\{x_{n-1},x_n\})}$$
is an order of linear quotients for $J_{t-1}(P_{n-2})$ and
$$x^{X\setminus F_1}<\cdots<x^{X\setminus F_r}<x^{X\setminus (G_1\cup\{x_{n-1},x_n\})}<\cdots<x^{X\setminus (G_s\cup\{x_{n-1},x_n\})}$$
is an order of linear quotients for $J_t(P_n)$.
Set $f_t=x^{(X\setminus\{x_n\})\setminus F_t}$ for any $1\leq t\leq r$ and $g_t=x^{X\setminus(G_t\cup\{x_{n-1},x_n\})}$ for any $1\leq t\leq s$.
Then $$x_nf_1<\cdots<x_nf_r<g_1<\cdots<g_s$$ is an order of linear quotients for $J_t(P_n)$.
Also for any $1\leq t\leq r$, $$\set_{J_t(P_n)}(x_nf_t)=\set_{J_t(P_{n-1})}(f_t),$$ since $(x_nf_{\ell}):(x_nf_t)=(f_{\ell}):(f_t)$, and for any $1\leq t\leq s$, $$\set_{J_t(P_n)}(g_t)=\set_{J_{t-1}(P_{n-2})}(g_t)\cup\{x_n\},$$ since $(x_nf_{\ell}):(g_t)=(x_n)$ for some $1\leq \ell\leq r$.

So $$\beta_{i,j}(I)=\sum_{\deg(x_nf_t)=j-i} {|\set_{J_t(P_n)}(x_nf_t)|\choose i}+\sum_{\deg(g_t)=j-i} {|\set_{J_{t-1}(P_{n-2})}(g_t)|+1\choose i}=$$
$$\sum_{\deg(f_t)=j-i-1} {|\set_{J_t(P_{n-1})}(f_t)|\choose i}+\sum_{\deg(g_t)=j-i} {|\set_{J_{t-1}(P_{n-2})}(g_t)|\choose i}+\sum_{\deg(g_t)=j-i} {|\set_{J_{t-1}(P_{n-2})}(g_t)|\choose i-1}.$$

Thus $$\beta_{i,j}(J_t(P_n))=\beta_{i,j-1}(J_t(P_{n-1}))+\beta_{i,j}(J_{t-1}(P_{n-2}))+\beta_{i-1,j-1}(J_{t-1}(P_{n-2})).$$
\end{proof}

Finally, we consider the $t$-independence ideal of a cycle graph. Recall that $K_t(C_n^c)\neq 0$ if and only if $n\geq 2t$.

\begin{cor}\label{cyclelinear2}
Let $n$ and $t$ be positive integers such that $n\geq 2t$. Then $J_t(C_n)$ has linear quotients and hence a $(n-2t+2)$-linear resolution.
\end{cor}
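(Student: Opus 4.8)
The plan is to mirror the proof of Corollary~\ref{pathlinear2}, replacing the path by the cycle and invoking Theorem~\ref{cyclelinear} in place of Theorem~\ref{pathlinear}. Recall that $J_t(C_n)=K_t(C_n^c)^{\vee}$, and since $K_t(C_n^c)=I_{\Delta_{K_t(C_n^c)}}$ is the Stanley--Reisner ideal of its own Stanley--Reisner complex, we have $J_t(C_n)=(I_{\Delta_{K_t(C_n^c)}})^{\vee}=I_{\Delta_{K_t(C_n^c)}^{\vee}}$.

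First I would apply Theorem~\ref{cyclelinear}, which gives that $\Delta_{K_t(C_n^c)}$ is shellable. Then Theorem~\ref{hdd} immediately yields that $I_{\Delta_{K_t(C_n^c)}^{\vee}}=J_t(C_n)$ has linear quotients; indeed, if $F_1<\cdots<F_m$ is a shelling order of $\Delta_{K_t(C_n^c)}$ then $x^{F_1^c}<\cdots<x^{F_m^c}$ is an order of linear quotients for $J_t(C_n)$. Next I would pin down the degrees: the minimal generators of $J_t(C_n)$ are precisely the monomials $x^{F^c}$ with $F\in\mathcal{F}(\Delta_{K_t(C_n^c)})$, and by Lemma~\ref{dimcycle}, under the hypothesis $n\geq 2t$ the complex $\Delta_{K_t(C_n^c)}$ is pure of dimension $2t-3$, so every facet has cardinality $2t-2$. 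Hence every minimal generator of $J_t(C_n)$ has degree $n-(2t-2)=n-2t+2$. Since $J_t(C_n)$ has linear quotients and all its generators share this degree, Lemma~\ref{Faridi} gives that $J_t(C_n)$ has an $(n-2t+2)$-linear resolution.

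I expect no real obstacle: every ingredient is already in hand (Theorems~\ref{cyclelinear}, \ref{hdd} and Lemmas~\ref{dimcycle}, \ref{Faridi}), and the argument is a verbatim analogue of the one used for $P_n$. The only points to be careful about are that Lemma~\ref{dimcycle} is applied in the correct regime $n\geq 2t$ (the complementary case $n<2t$ giving the trivial ideal, which is excluded by hypothesis), and that the $(2t-2)$-element facets of $\Delta_{K_t(C_n^c)}$ indeed correspond under Alexander duality to degree-$(n-2t+2)$ generators of $J_t(C_n)$.
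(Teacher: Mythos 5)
Your proposal is correct and follows exactly the paper's own argument: Theorem~\ref{cyclelinear} plus Theorem~\ref{hdd} give linear quotients, and the purity of $\Delta_{K_t(C_n^c)}$ of dimension $2t-3$ (Lemma~\ref{dimcycle}) pins the generator degrees at $n-2t+2$ so that Lemma~\ref{Faridi} applies. No gaps.
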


\begin{proof}
By Theorems \ref{cyclelinear} and \ref{hdd}, $J_t(C_n)$ has linear quotients. Also $x^F\in \mathcal{G}(J_t(C_n))$ if and only if $F^c=V(C_n)\setminus F\in \mathcal{F}(\Delta_{K_t(C_n^c)})$. Since $\Delta_{K_t(C_n^c)}$ is pure of dimension $2t-3$ , so any minimal generator of
$J_t(C_n)$ has degree $n-(2t-2)$.  So by Lemma \ref{Faridi}, $J_t(P_n)$ has a linear resolution.
\end{proof}

\begin{cor}\label{cyclelinear3}
Let $n$ and $t$ be positive integers such that $n\geq 2t$. Then $\T{pd}(K_t(C_n^c))=n-2t+1$. In particular $\T{pd}(I(C_n^c))=n-3$.
\end{cor}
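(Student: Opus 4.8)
The plan is to mirror the proof of the analogous statement for path graphs and deduce everything from Alexander duality together with the linearity of the resolution of $J_t(C_n)$ already established in Corollary \ref{cyclelinear2}. Concretely, I would first record that $J_t(C_n)=K_t(C_n^c)^{\vee}$ by definition, and since $(I^{\vee})^{\vee}=I$ for a squarefree monomial ideal, $K_t(C_n^c)=J_t(C_n)^{\vee}$. Applying Theorem \ref{1.3} with $I=J_t(C_n)$ then gives
$$\T{pd}(K_t(C_n^c))=\T{pd}(J_t(C_n)^{\vee})=\T{reg}(R/J_t(C_n)),$$
so the whole problem is reduced to computing $\T{reg}(R/J_t(C_n))$.

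For that regularity computation I would invoke Corollary \ref{cyclelinear2}: since $n\geq 2t$, the ideal $J_t(C_n)$ is nonzero and has an $(n-2t+2)$-linear resolution, so every nonzero graded Betti number $\beta_{i,j}(J_t(C_n))$ satisfies $j-i=n-2t+2$, whence $\T{reg}(J_t(C_n))=n-2t+2$. Using $\T{reg}(R/J_t(C_n))=\T{reg}(J_t(C_n))-1$ we obtain $\T{reg}(R/J_t(C_n))=n-2t+1$, and combining this with the displayed equality above yields $\T{pd}(K_t(C_n^c))=n-2t+1$. For the final assertion I would specialize to $t=2$: then $K_2(C_n^c)=I(C_n^c)$ and, for $n\geq 4$, the formula gives $\T{pd}(I(C_n^c))=n-3$.

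I do not expect a genuine obstacle here: the substantive combinatorial work (the shelling of $\Delta_{K_t(C_n^c)}$ in Theorem \ref{cyclelinear}, which feeds Corollary \ref{cyclelinear2}) has already been carried out. The only points requiring care are bookkeeping matters: correctly identifying which ideal plays the role of $I$ in Theorem \ref{1.3}, tracking the degree shift between the regularity of an ideal and that of its quotient ring, and checking that the hypothesis $n\geq 2t$ is precisely what makes both $K_t(C_n^c)\neq 0$ and the linear resolution of $J_t(C_n)$ available, so that the stated formula is non-vacuous.
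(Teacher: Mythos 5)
Your proposal is correct and follows essentially the same route as the paper: apply Theorem \ref{1.3} with $I=J_t(C_n)=K_t(C_n^c)^{\vee}$ to get $\T{pd}(K_t(C_n^c))=\T{reg}(R/J_t(C_n))$, then read off the regularity from the $(n-2t+2)$-linear resolution of Corollary \ref{cyclelinear2}. The bookkeeping points you flag (the degree shift $\T{reg}(R/I)=\T{reg}(I)-1$ and the hypothesis $n\geq 2t$) are exactly the ones the paper relies on.
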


\begin{proof}
By Theorem \ref{1.3}, $\T{pd}(K_t(C_n^c))=\T{reg}(R/K_t(C_n^c)^{\vee})=\T{reg}(R/J_t(C_n))$. Now, by Corollary \ref{cyclelinear2}, $\T{reg}(R/J_t(C_n))=\T{reg}(J_t(C_n))-1=n-2t+1$.
\end{proof}

In the following theorem, it is shown that the projective dimension of $R/J_t(C_n)$ depends only on $t$.
\begin{thm}\label{pdcycle}
Let $n$  and $t$ be positive integers such that $n\geq 2t$. Then $$\T{pd}(R/J_t(C_n))=2t-1.$$
\end{thm}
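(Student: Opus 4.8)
The plan is to relate $\T{pd}(R/J_t(C_n))$ to a regularity via Alexander duality and then compute that regularity using the structure theorems already established for the $t$-clique ideal of $C_n^c$. By Theorem~\ref{1.3} (in the form noted after it), $\T{pd}(R/J_t(C_n))=\T{reg}(J_t(C_n)^{\vee})=\T{reg}(K_t(C_n^c))$, since $J_t(C_n)=K_t(C_n^c)^{\vee}$ and Alexander duality is an involution. So the task reduces to showing $\T{reg}(K_t(C_n^c))=2t-1$ for $n\geq 2t$. Alternatively, and perhaps more cleanly, I would use the Auslander--Buchsbaum formula: by Theorem~\ref{cyclelinear}, $R/K_t(C_n^c)$ is Cohen--Macaulay, hence $\T{pd}(R/K_t(C_n^c))=\T{codim}(K_t(C_n^c))=n-\dim(\Delta_{K_t(C_n^c)})-1=n-(2t-3)-1=n-2t+2$ by Lemma~\ref{dimcycle} (for $n\geq 2t$); and then by Theorem~\ref{1.3}, $\T{pd}(R/K_t(C_n^c))=\T{reg}(R/J_t(C_n))=n-2t+1$, which only recovers Corollary~\ref{cyclelinear3}, not what we want. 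So duality must be applied in the other direction.

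The key computation is therefore $\T{reg}(K_t(C_n^c))=2t-1$. First, since $K_t(C_n^c)\neq 0$ for $n\geq 2t$ and it has a $t$-linear resolution? — no: $K_t(C_n^c)$ is generated in degree $t$, but it need not have a linear resolution (its regularity is what we are computing). Instead I would argue directly from the Eagon--Reiner/Terai machinery: $\T{reg}(K_t(C_n^c))=\T{reg}(I_{\Delta_{J_t(C_n)}^{\vee}})$, and since $R/J_t(C_n)$ is Cohen--Macaulay of codimension $t$ (Theorem~\ref{pdc}(iii)) with $J_t(C_n)$ having a $(n-2t+2)$-linear resolution (Corollary~\ref{cyclelinear2}), I can instead compute $\T{pd}(R/J_t(C_n))$ from a \emph{Betti splitting}. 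Here is the concrete route I would take: pick a vertex $x_n$ of $C_n$; one checks that $J_t(C_n)=x_n J_1 + J_2$ where $J_1,J_2$ are the appropriate contractions/deletions, and that this is a vertex splitting, so by Theorem~\ref{MKtheorem} it is a Betti splitting with $\beta_{i,j}(J_t(C_n))=\beta_{i,j-1}(J_1)+\beta_{i,j}(J_2)+\beta_{i-1,j-1}(J_2)$. Taking top homological degree gives $\T{pd}(J_t(C_n))=\max\{\T{pd}(J_1)+1,\ \T{pd}(J_2)\}$. The deletion $\del_{C_n}(x_n)$ is the path $P_{n-1}$, so one of the pieces is governed by $J_t(P_{n-1})$ or $J_{t-1}(P_{n-2})$, whose projective dimensions are computable: by the corollary giving $\T{pd}(I(P_n^c))=n-3$ and its $t$-analogue $\T{pd}(K_t(P_n^c))=n-2t+1$ together with Theorem~\ref{1.3}, one gets $\T{pd}(R/J_t(P_n))=n-2t+1$ and hence $\T{pd}(J_t(P_n))=n-2t$. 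Running the splitting recursion in $n$ then shows the two terms in the max balance out to a value independent of $n$, namely $2t-1$.

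More precisely, I would set up the induction on $t$ (and within it, a short induction on $n\geq 2t$). For the base case $t=1$: $J_1(C_n)=\bigcap_{i}(x_i)=(x_1\cdots x_n)$ is principal, so $\T{pd}(R/J_1(C_n))=1=2\cdot1-1$. For the inductive step, from $\Delta'_{n,t}=\Delta_1\cup\cdots\cup\Delta_n$ as in the proof of Theorem~\ref{cyclelinear}, dualize: the facets of $\Delta'_{n,t}$ containing $x_n$ correspond (after removing $x_{n-1},x_n$, exactly as in Theorem~\ref{pathlinear}) to facets of $\Delta_{n-2,t-1}$ of the path, and those avoiding $x_n$ to facets of $\Delta_{n-1,t}$. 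This gives the splitting $J_t(C_n)=x_n\, J_t(C_{n-1})^{\sim}+ (\text{piece from }J_{t-1}(P_{n-2}))$ — I would need to identify the first summand carefully, since $C_{n-1}$ is not an induced subgraph issue but a contraction; most likely the cleaner statement is that removing a vertex and its structure yields $J_t(P_{n-1})$ on one side and $J_{t-1}(P_{n-2})$ on the other, matching the recursion in Theorem~\ref{cor3} with the path replaced by the cycle at the top level. Then $\T{pd}(J_t(C_n))=\max\{\T{pd}(J_t(P_{n-1}))+1,\ \T{pd}(J_{t-1}(P_{n-2}))\}=\max\{(n-1-2t)+1,\ (n-2-2(t-1))\}=\max\{n-2t,\ n-2t\}=n-2t$; and $\T{pd}(R/J_t(C_n))=n-2t+1$??? — that again gives the wrong answer, so the correct splitting must instead involve $J_{t-1}$ of a \emph{cycle} or the recursion must be applied to reduce $n$ down to $n=2t$, where a direct computation (the independence complex of $C_{2t}$ has only the alternating vertex sets as independent $t$-sets, giving a very explicit $J_t(C_{2t})$) yields $\T{pd}(R/J_t(C_{2t}))=2t-1$, after which one shows the Betti splitting in $n$ preserves this value because the new generator contributes a syzygy in the same homological degree. \textbf{The main obstacle} I anticipate is exactly this: correctly identifying the vertex splitting of $J_t(C_n)$ (equivalently, the shedding decomposition of $\Delta_{J_t(C_n)}$) so that the recursion stabilizes at $2t-1$ rather than drifting with $n$ — this requires showing that each step of peeling off a vertex of the cycle adds exactly one to $\T{pd}(R/J\cap)$ while the intersection term caps the total, so that by the time $n$ is reduced to $2t$ the projective dimension has already reached its stable value $2t-1$ and stays there. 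Verifying that the intersection ideal $J_1\cap J_2$ in the Betti splitting has the right (one less) projective dimension is the crux, and I would handle it by showing that $J_1\cap J_2$ is itself, up to a monomial multiple, a $t$-independence ideal of a smaller cycle or path whose projective dimension is known by induction.
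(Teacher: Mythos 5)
Your opening reduction is correct and matches the paper's: $\T{pd}(R/J_t(C_n))=\T{reg}(J_t(C_n)^{\vee})=\T{reg}(K_t(C_n^c))$ by Theorem~\ref{1.3}, and you rightly discard the Auslander--Buchsbaum route. But from that point on the argument has a genuine gap on both sides of the desired equality, and it also contains a concrete error. The error: you apply Theorem~\ref{1.3} to conclude $\T{pd}(R/J_t(P_n))=n-2t+1$. What Theorem~\ref{1.3} actually gives is $\T{pd}(K_t(P_n^c))=\T{reg}(R/J_t(P_n))=n-2t+1$; the projective dimension of $R/J_t(P_n)$ is $t$ by Theorem~\ref{pdc}(iii), since $P_n$ is chordal and $R/J_t(P_n)$ is Cohen--Macaulay of height $t$. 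Feeding the wrong value into your splitting recursion is what makes it ``drift with $n$.'' More fundamentally, the vertex/Betti splitting of $J_t(C_n)$ that drives your recursion is never exhibited or verified, and even with corrected inputs the candidate recursion you write down does not produce $2t-1$; you acknowledge this yourself and leave the ``main obstacle'' open. So neither the inequality $\T{pd}(R/J_t(C_n))\leq 2t-1$ nor $\geq 2t-1$ is actually established.

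The paper's proof supplies exactly the two ingredients missing here, and notably it never splits $J_t(C_n)$ at all. For the upper bound it decomposes the \emph{clique} ideal as $K_t(C_n^c)=x_nK_{t-1}(L^c)+K_t(P_{n-1}^c)$, where $L$ is the path on $V(C_n)\setminus\{x_1,x_{n-1},x_n\}$, and applies the subadditivity of regularity for sums of ideals, $\T{reg}(I+J)\leq\T{reg}(I)+\T{reg}(J)-1$; each summand is the clique ideal of the complement of a chordal graph, hence has a linear resolution with regularity $t$, giving $\T{reg}(K_t(C_n^c))\leq 2t-1$. For the lower bound it uses the linear-quotients order on $\mathcal{G}(J_t(C_n))$ induced (via Theorem~\ref{hdd}) by the shelling of Theorem~\ref{cyclelinear}, together with Theorem~\ref{Leila}, and exhibits the explicit generator $x^{F^c}$ with $F=\{x_1,\ldots,x_{2t-2}\}$, showing $\{x_1,\ldots,x_{2t-2}\}\subseteq\set_{J_t(C_n)}(x^{F^c})$ and hence $\T{pd}(J_t(C_n))\geq 2t-2$. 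If you want to salvage your plan, you would need to replace the hypothetical splitting of $J_t(C_n)$ with an argument of this kind: some mechanism that bounds $\T{reg}(K_t(C_n^c))$ from above independently of $n$, and an explicit witness (a generator with a large $\set$, or a nonvanishing Betti number) for the lower bound.
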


\begin{proof}
Let $C_n:x_1,\ldots,x_n$ be a cycle. By Theorem \ref{1.3},
$\T{pd}(R/J_t(C_n))=\T{reg}(K_t(C_n^c))$.
Assume that $L$ is an induced subgraph of $C_n$ on the vertex set $V(C_n)\setminus \{x_1,x_{n-1},x_n\}$, which is a path. Let $W$ be an independent set of $C_n$ of size $t$. If $x_n\in W$, then $W\setminus \{x_n\}$ is an independent set of $L$ of size $t-1$.
If $x_n\notin W$, then $W$ is an independent set of $P_{n-1}$ too.
Conversely for any independent set $W'$ of $L$ of size $t-1$, $W'\cup\{x_n\}$ is an independent set of $C_n$ of size $t$. Indeed, we have $$K_t(C_n^c)=x_nK_{t-1}(L^c)+K_t(P_{n-1}^c).$$
Thus by \cite[Proposition 3.4]{MVi}, $$\T{reg}(K_t(C_n^c))\leq \T{reg}(x_nK_{t-1}(L^c))+\T{reg}(K_t(P_{n-1}^c))-1.$$
Note that $L$ and $P_{n-1}$ are chordal, so by Theorem \ref{pdc} and Theorem \ref{1.3}, $$\T{reg}(K_{t-1}(L^c))=\T{pd}(R/J_{t-1}(L))=t-1$$ and $\T{reg}(K_t(P_{n-1}^c))=\T{pd}(R/J_{t}(P_{n-1}))=t$. Also
$$\T{reg}(x_nK_{t-1}(L^c))=\T{reg}(K_{t-1}(L^c))+1=(t-1)+1=t.$$ Therefore
$$\T{pd}(R/J_t(C_n))=\T{reg}(K_t(C_n^c))\leq 2t-1.$$

We show that $\T{pd}(R/J_t(C_n))\geq 2t-1$.
For any $1\leq i\leq n$, let $L_i$ be the induced subgraph of $C_n$ on the set $\{x_1,\ldots,x_{i-1},x_{i+1},\ldots,x_n\}$ which is a path graph and let $\Delta_i=\Delta_{K_t(L_i^c)}$ be a simplicial complex on the vertex set $\{x_1,\ldots,x_{i-1},x_{i+1},\ldots,x_n\}$. Then by Theorem \ref{pathlinear}, $\Delta_i$ has a shelling order say $F_{i1}<\cdots<F_{is_i}$ on its facets. As was shown in the proof of Theorem \ref{cyclelinear}, $\Delta=\Delta_1\cup \cdots \cup \Delta_n$. So
any minimal generator of $J_t(C_n)$ is of the form $x^{F_{ik}^c}$ for some $1\leq i\leq n$ and $1\leq k\leq s_i$, where $F_{ik}^c=\{x_1,\ldots,x_n\}\setminus F_{ik}$.
Consider the shelling order $\mathcal{L}$ on $\Delta'_{n,t}=\Delta_{K_t(C_n^c)}$ as described in the proof of Theorem \ref{cyclelinear}. By Theorem \ref{hdd}, this induces an order of linear quotients on the minimal generators of $J_t(C_n)$. Also by Theorem \ref{Leila}, $$\T{pd}(J_t(C_n))=\max\{|\set_{J_t(C_n)}(u)|:\ u\ \textrm{is a minimal generator of}\ J_t(C_n)\}.$$
Set $F=\{x_1,x_2,\ldots,x_{2t-2}\}$.
Then $F$ contains no independent set of $C_n$ of size $t$ and $F\in \Delta_{2t-1}\setminus (\Delta_1\cup\cdots\cup \Delta_{2t-2})$. Since $|F|=2t-2$ and $\dim(\Delta_{2t-1})=2t-3$, so $F\in \mathcal{F}(\Delta_{2t-1})$.
We show that $\{x_1,x_2,\ldots,x_{2t-2}\}\subseteq \set_{J_t(C_n)}(x^{F^c})$. For any $1\leq i\leq 2t-2$, set $H_i=F\setminus \{x_i\}$.
The induced subgraph $C_n[H_i]$ is the union of (at most) two disjoint paths $x_1,x_2\ldots,x_{i-1}$ and $x_{i+1},x_{i+2},\ldots,x_{2t-2}$, so that one of them has odd number of vertices. One can see that if $i$ is an even number, then $H_i\cup \{x_n\}\in \mathcal{F}(\Delta_i)$ and if $i$ is an odd number, then $H_i\cup \{x_{2t-1}\}\in \mathcal{F}(\Delta_i)$.  Thus in the order of linear quotients for $J_t(C_n)$ induced by $\mathcal{L}$, we have  $x^{(H_i\cup \{x_n\})^c}<x^{F^c}$ for even $i$'s and $x^{(H_i\cup \{x_{2t-1}\})^c}<x^{F^c}$ for odd $i$'s. Moreover, $x^{(H_i\cup \{x_n\})^c}:x^{F^c}=x^{F\setminus (H_i\cup \{x_n\})}=x_i$ and $x^{(H_i\cup \{x_{2t-1}\})^c}:x^{F^c}=x^{F\setminus (H_i\cup \{x_{2t-1}\})}=x_i$. Thus for any $1\leq i\leq 2t-2$, $x_i\in \set_{J_t(C_n)}(x^{F^c})$. So $\T{pd}(J_t(C_n))\geq |\set_{J_t(C_n)}(x^{F^c})|\geq 2t-2$. Thus $$\T{pd}(R/J_t(C_n))=\T{pd}(J_t(C_n))+1\geq 2t-1.$$
\end{proof}


\providecommand{\bysame}{\leavevmode\hbox
to3em{\hrulefill}\thinspace}

\end{document}